%To Use: Remember to key in \documentclass{article} in line 1. Then in line 2 key in \input{LinHeader}
\documentclass[12pt]{amsart}
\usepackage{amsmath,amsfonts,amsthm,amssymb,mathrsfs}
\usepackage{times, graphicx}
\usepackage{accents, ulem, color, comment}
\textwidth=16cm
\textheight=23cm
\voffset=-1.8cm
\hoffset=-1.1cm
%------------------------------------------------------------------------------%
\theoremstyle{plain}
\newtheorem{theorem}{Theorem}[section]
\newtheorem{lemma}[theorem]{Lemma}
\newtheorem{lem}[theorem]{Lemma}
\newtheorem{prop}[theorem]{Proposition}

\theoremstyle{definition}
\newtheorem{definition}[theorem]{Definition}
\newtheorem{defn}[theorem]{Definition}
\theoremstyle{remark}

\newtheorem*{rmk}{Remark}

\newcommand{\vphi}{\varphi}

\newcommand{\pl}{\partial}

\newcommand{\na}{\nabla}
\newcommand{\lt}{\left}
\newcommand{\rt}{\right}
\newcommand{\rw}{\rightarrow}
\newcommand{\R}{\mathbb{R}}

\renewcommand{\tilde}{\widetilde}
\DeclareMathOperator{\tr}{tr}

%big dot
\makeatletter
\newcommand*\bigcdot{\mathpalette\bigcdot@{.5}}
\newcommand*\bigcdot@[2]{\mathbin{\vcenter{\hbox{\scalebox{#2}{$\m@th#1\bullet$}}}}}
\makeatother

\title{The mass of the static extension of small spheres}
\author[B. Harvie]{Brian Harvie}
\address{National Center for Theoretical Sciences\\No. 1 Sec. 4 Roosevelt Rd., National Taiwan
University\\Taipei,106, Taiwan}
\email{bharvie@ncts.ntu.edu.tw}
\author[Y.-K. Wang]{Ye-Kai Wang}
\address{Department of Applied Mathematics, National Yang Ming Chiao Tung University,
Hsinchu, Taiwan}
\address{National Center for Theoretical Sciences\\No. 1 Sec. 4 Roosevelt Rd., National Taiwan
University\\Taipei,106, Taiwan}
\email{ykwang@math.nycu.edu.tw}
\thanks{Y.-K. Wang is supported by Taiwan NSTC grant
109-2628-M-006-001-MY3. We would like to thank professor Pengzi Miao and Divid Wiygul for their interest in the paper.}
\date{}
\begin{document}
\maketitle
\begin{abstract}
We give a simple proof to the computation of ADM mass of the static extensions of small spheres in Wiygul \cite{W1, W2}. It makes use of the mass formula $m = \frac{1}{4\pi} \int_{\pl M} \frac{\pl V}{\pl \nu}$ for an asymptotically flat static manifold with boundary.
\end{abstract}
\section{Introduction}
Let $\gamma$ be a Riemannian metric on $\Omega = \{ |x| \le 1\} \subset \R^3$. Denote the induced metric and mean curvature of $\pl\Omega = S^2$ by $\sigma$ and $H$ and henceforth $(\sigma, H)$ is referred to as the {\it Bartnik data} of $S^2$. The Bartnik quasi-local mass\footnote{We make several simplifications in the definition given below. For the complete definition and various modifications, see Anderson \cite{A2}.} was introduced in \cite{B} to measure the gravitational energy\footnote{In general relativity, $(\Omega, \gamma)$ corresponds to a region in a time-symmetric initial data set and hence has nonnegative scalar curvature. We do not make this assumption here.} contained in $(\Omega, \gamma)$. Let $\mathcal{PM}$ denote the set of complete Riemannian metrics on $M= \{ |x|\ge 1\} \subset \R^3$ that have nonnegative scalar curvature and are asymptotically flat in the sense that
\begin{align*}
|g_{ij}(x) - \delta_{ij}| + |x||g_{ij,k}(x)| + |x|^2 |g_{ij,kl}(x)| \le O(|x|^{-1}).
\end{align*}
\begin{defn}
We define the {\it extensions} of $(\Omega,\gamma)$, denoted by $\mathcal{PM}[\Omega, \gamma]$, to consist of metrics $g \in \mathcal{PM}$ such that
\begin{align}
&\mbox{the induced metric on } \pl M = S^2 \mbox{ is }  \sigma, \\
 & \mbox{the mean curvature of } \pl M \mbox{ is equal to } H, \\
 & \mbox{and } M \mbox{ has no stable minimal two-sphere enclosing } \pl M. \label{no horizon}
\end{align}
Our convention is that for unit sphere in Euclidean space, $S^2(1) \subset \bar B_1$ and $S^2(1) \subset \R^3 \setminus B_1$, both mean curvatures are equal to $2$.

\end{defn}

\begin{defn}
The Bartnik mass of $(\Omega, \gamma)$ is taken to be the infimum of the ADM mass of the extensions
\[m_B(\Omega, \gamma) = \inf \{ m_{ADM}(g) \,| \, g \in \mathcal{PM}[\Omega,\gamma] \} \]
\end{defn}

\begin{comment}
Given a Riemannian metric $\sigma$ and a function $H$ on a topological sphere $S^2$, henceforth referred to as the Bartnik data, we define the class
\begin{align*}
\mathcal{PM}[\sigma,H] := \{ g \in \mathcal{PM} \,|& \mbox{ the induced metric on } \pl M = S^2 \mbox{ is }  \sigma, \\
 & \mbox{ the mean curvature of } \pl M \mbox{ is less than } H, \\
&\,\, \pl M \mbox{ is outer-minimizing}. \}
\end{align*}

\begin{definition}
The Bartnik-Bray outer mass of the Bartnik data $(\sigma, H)$ is taken to be the infimum of the ADM mass \cite{} of the admissible extensions
\begin{align*}
m_B(\sigma, H) = \inf \{ m_{ADM}(g) \,|\, g \in \mathcal{PM}[\sigma, H] \}.
\end{align*}
\end{definition}
\end{comment}

Closely associated with the Bartnik mass is the concept of {\it static extensions}. A static extension of $(\Omega,\gamma)$ (or the Bartnik data $(\sigma, H)$) consists of a  metric $g \in \mathcal{PM}[\Omega,\gamma]$ and a  function (static potential) $V$ on $M$ satisfying the static equations
\begin{align*}
\mbox{Hess}_g V = V \mbox{Ric}(g),\quad 
\Delta_g V =0.
\end{align*}
Bartnik conjectured that the infimum in the definition of the Bartnik mass is achieved by the ADM mass of a unique static extension.

Huisken-Ilmanen \cite[Theorem 9.2]{HI} showed that the large sphere limits of the Bartnik mass of asymptotically flat manifolds coincides with the ADM mass. Wiygul \cite{W1,W2} studied the small sphere limits.
\begin{theorem}[Corollary 1.18 of \cite{W2}]\label{Wiygul_main}
Let $p$ be a point in a smooth Riemannian manifold. Let $B_r$ be the closed geodesic ball of radius $r$ and center $p$. Let $\sigma_r, H_r$ be the induced metric and mean curvature of $\pl B_r$. Then for $r$ sufficiently small, there exists a static extension $(g_r, V_r)$, close to $(g_{euc}, 1)$, of $B_r$. Moreover, the ADM mass has Taylor expansion
\begin{align}
m_{ADM}(g_r) = \frac{R}{12} r^3 + \lt( \frac{1}{72} |\mbox{Rc}|^2 - \frac{5}{432}R^2 + \frac{1}{120}\Delta R \rt)r^5 + O(r^6) \label{small sphere limits}
\end{align}
as $r \rw 0$. Here $R$ is the scalar curvature,  $|\mbox{Rc}|^2$ the square norm of the Ricci curvature, $\Delta$ the Laplacian, and all terms are evaluated at $p$. 
\end{theorem}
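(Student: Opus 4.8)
The plan is to bypass the asymptotic expansion of $V_r$ at infinity and instead exploit the boundary mass formula
\[ m_{ADM}(g_r) = \frac{1}{4\pi}\int_{\pl M}\frac{\pl V_r}{\pl\nu}\,dA, \]
with $\nu$ the unit normal pointing towards infinity, which localizes the whole computation to $\pl B_r$. This identity follows by applying the divergence theorem to the $g_r$-harmonic function $V_r$ on $\{r\le|x|\le\rho\}$ and letting $\rho\to\infty$ with the Schwarzschild-type asymptotics $V_r = 1 - m_{ADM}(g_r)/|x| + O(|x|^{-2})$. (Note also that tracing $\mbox{Hess}_{g}V = V\,\mbox{Ric}(g)$ against $g$ and using $\Delta_g V = 0$ forces $R_{g_r}\equiv 0$, so the extension is scalar-flat and its mass is produced entirely by the mismatch of the Bartnik data with the round data.) It therefore suffices to compute $\pl_\nu V_r$ and the area element along $\pl B_r$ to order $r^5$ and integrate.

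First I would record the geometric input. In geodesic normal coordinates at $p$ one has $g_{ij} = \delta_{ij} - \tfrac13 R_{ikjl}x^kx^l - \tfrac16 \na_m R_{ikjl}\,x^kx^lx^m + O(|x|^4)$, from which the Bartnik data of $\pl B_r$ expand as
\[ r^{-2}\sigma_r = \sigma_{\mathbb S^2} + r^2\sigma^{(2)} + r^3\sigma^{(3)} + r^4\sigma^{(4)} + O(r^5), \qquad \tfrac{r}{2}H_r = 1 + r^2 h^{(2)} + r^3 h^{(3)} + r^4 h^{(4)} + O(r^5), \]
where $\sigma_{\mathbb S^2}$ is the round unit metric and the coefficients are explicit curvature polynomials at $p$. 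Decomposing these into spherical harmonics, the monopole ($\ell=0$) pieces are built from $R$, $\Delta R$, $R^2$, $|\mbox{Rc}|^2$, while the anisotropic pieces are carried at lowest order by the trace-free Ricci tensor ($\ell=2$) and by $\na R, \na\mbox{Rc}$ ($\ell=1$). As a sanity check, matching the monopole of $H_r$ and the enclosed area to a Schwarzschild sphere already yields the leading coefficient $\tfrac{R}{12}$.

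Next I would construct the extension perturbatively about $(g_{euc},1)$, writing $g_r = g_{euc}+h$, $V_r = 1+v$ and treating the curvature at $p$ as the small parameter (existence near flat space being supplied by the implicit function theorem, or taken from the cited construction). At linear order, since the Hessian and Laplacian of a constant vanish in any metric, the static vacuum system reduces to $\mbox{Ric}_{lin}(h) = \na^2 v$, $\Delta v = 0$, $R_{lin}(h)=0$, so that $v$ is a decaying harmonic function $v = \sum_\ell c_\ell |x|^{-(\ell+1)}Y_\ell$ on the exterior and $h$ is recovered from $v$ up to gauge and linearized-Ricci-flat modes. Because the background is rotationally symmetric, the linearized operator decouples the spherical harmonic sectors; prescribing $\sigma_r$ and $H_r$ then fixes the free constants sector by sector, the monopole sector determining $c_0$ and hence the $r^3$ mass.

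Finally, evaluating $\tfrac{1}{4\pi}\int_{\pl B_r}\pl_\nu V_r\,dA$ to order $r^5$ forces the second-order analysis, which I expect to be the main obstacle. Two effects must be controlled: the quadratic nonlinearities of $\mbox{Ric}$ and $\mbox{Hess}$ mix the $\ell=2$ (trace-free Ricci) modes of the linear solution back into the $\ell=0$ sector, and in the boundary integral the anisotropic ($\ell\ge1$) parts of $\pl_\nu V_r$ pair with the anisotropic area element, since only the total $\ell=0$ content survives. Disentangling the genuinely quadratic contributions ($R^2$ and $|\mbox{Rc}|^2$) from the higher-derivative linear one ($\Delta R$, arising from the $\na R$ and $\na^2\mbox{Rm}$ terms of the normal-coordinate expansion) is precisely where the coefficients $\tfrac{1}{72}$, $-\tfrac{5}{432}$, $\tfrac{1}{120}$ get pinned down; a fixed gauge (e.g.\ radial or harmonic) for $h$ together with the no-horizon condition \eqref{no horizon} would guarantee that the correct branch is selected at each order.
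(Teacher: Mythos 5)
You begin exactly where the paper does: the Komar-type identity $m_{ADM}(g_r)=\frac{1}{4\pi}\int_{\pl M}\pl_\nu V_r$, justified by the divergence theorem and the expansion $V_r=1-m/|x|+O(|x|^{-2})$, which localizes the mass to the inner boundary. That part is correct and is the paper's Lemma on static mass. The difficulty is everything after it. Your plan for extracting $\pl_\nu V_r$ is to build the exterior solution perturbatively, solving $\mbox{Ric}_{lin}(h)=\na^2 v$, $\Delta v=0$ sector by sector and then feeding the quadratic nonlinearities of $\mbox{Ric}$ and $\mbox{Hess}$ back into the $\ell=0$ mode at second order. This is, in substance, Wiygul's original argument in \cite{W1,W2}, and your last paragraph concedes that the entire order-$r^5$ computation --- the part of the theorem that carries all the content, i.e.\ the coefficients $\frac{1}{72}$, $-\frac{5}{432}$, $\frac{1}{120}$ --- is ``the main obstacle'' and is left undone. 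Describing which modes mix and asserting that the coefficients ``get pinned down'' is not a proof of the stated expansion; as written, only the $r^3$ coefficient is (heuristically) obtained, and even that via a Schwarzschild-matching sanity check rather than from the construction itself.

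The missing idea that makes the computation tractable without ever solving for the bulk perturbation $h$ is the following chain of boundary identities: $0=\Delta_{g}V=\Delta_{\pl M}V+\mbox{Hess}_{g}V(\nu,\nu)+H\,\pl_\nu V$, the static equation $\mbox{Hess}_{g}V(\nu,\nu)=V\,\mbox{Ric}(\nu,\nu)$, and the Gauss equation $2K=-2\mbox{Ric}(\nu,\nu)+H^2-|h|^2_\sigma$. Together these express $H\,\pl_\nu V$ entirely in terms of the Bartnik data, the intrinsic Gauss curvature, the second fundamental form, and $V|_{\pl M}$. The only quantities not given directly by the data are then $\dot V$ on the boundary --- determined by writing the harmonic function $\dot V$ as $\sum_\ell \vphi_\ell s^{-(\ell+1)}$ and matching $\tilde\Delta\dot V+2\pl_s\dot V=\dot K-\dot H$ --- and the trace-free part of $\dot h_{ab}$, which the Codazzi equation on a static manifold pins down from $\dot V$ and $\dot\sigma$. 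At that point the $r^5$ coefficient is a finite list of explicit integrals of curvature polynomials against $X^iX^jX^kX^l$ over $S^2$. Without this reduction (or an equally concrete substitute), your proposal does not establish \eqref{small sphere limits}; it only reproduces the setup of the harder proof it was meant to replace.
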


From the work of Huisken-Ilmanen \cite{HI}, the Hawking mass furnishes a lower bound for the {\it Bartnik-Bray outer mass}\footnote{It is a modification of the Bartnik mass that requires $\pl M$ to be outer minimizing instead of \eqref{no horizon} in the definition of $\mathcal{PM}[\Omega,\gamma]$.} $m_{out}$ \cite{Bray} (see also \cite[Section 3.3]{Miao}). The small sphere limits of Hawking mass is evaluated in \cite{FST} 
\begin{align*}
m_H(\pl B_r) = \frac{R}{12} r^3 + \lt( -\frac{1}{144}R^2 + \frac{1}{120}\Delta R \rt)r^5 + O(r^6).
\end{align*}
As \eqref{small sphere limits} gives a upper bound, Wiygul obtains the main theorem in \cite{W1}:
\begin{align*}
\lim_{r\rw 0} \frac{m_{out}(\sigma_r, H_r)}{r^3} = \frac{R}{12};
\end{align*}
if the Riemann curvature tensor vanishes at $p$,
\begin{align*}
\lim_{r\rw 0} \frac{m_{out}(\sigma_r, H_r)}{r^5} = \frac{\Delta R}{120}.
\end{align*}
\begin{rmk}
We are interested in \eqref{small sphere limits} rather than the (more difficult) evaluation of the Bartnik mass or the Bartnik-Bray outer mass. Strictly speaking, it is not necessary to mention the Bartnik mass in this paper. For an introduction on the recent development of the Bartnik mass and its modifications, we recommend the excellent survey by Anderson \cite{A2}.
\end{rmk}

Let us describe the idea of Theorem \ref{Wiygul_main}. The existence part was first proved by Anderson \cite{A1}. However, the proof is abstract and not amenable for the mass estimate. Wiygul gives a new proof of existence based on an iteration of linearized static equations so that the mass can be read off from the solutions. The key lies in Lemma 2.21 of \cite{W1} where he recognizes that at the linear level the solution is a harmonic conformal transformations together with a diffeomorphism.  The harmonicity then enables him to relate the ADM mass to the Bartnik data, (2.25) of \cite{W1}. To compute $r^5$ coefficients, he solves the linearized static equations to the next order and carefully keeps track of the contributions of the various components intrepidly in \cite{W2}.

The goal of this paper is to give a simple proof of \eqref{small sphere limits} assuming the existence of a static extension (Anderson's existence result is sufficient for us). The key to our approach is the well-known Komar-type formula for the ADM mass of a static manifold.
\begin{align}\label{mass integral formula}
m_{ADM}(g) = \frac{1}{4\pi} \int_{\pl M} \frac{\pl V}{\pl \nu}
\end{align}
together with the observation that in the equation
\[ \Delta_g V = \Delta_{\pl M}V + \mbox{Hess}_g V(\nu,\nu) + H \frac{\pl V}{\pl \nu} \] the hessian term can be related to the Bartnik data through the static equations and the Gauss equation. This immediately  yields the $r^3$ expansion and the $r^5$ expansion if we assume the Riemann curvature tensor vanishes at $p$. We treat them in Section 3.   

To get the general $r^5$ expansion, we first solve, in Section 4.1, the linearization of the static potential $\dot V$ using the spherical harmonics decomposition and the linearization of the second fundamental form $\dot h_{ab}$ of $\pl M$ (with respect to the static extension) via the Codazzi equation. With these at hand, the rest is direct computation carried out in Section 4.2 and 4.3.

\section{Setup and preliminaries}
Let $p$ be a point in a 3-dimensional Riemannian manifold $(M^{3},\gamma)$. It is well-known that in normal coordinates $x^{i}$ centered at $p$ the metric has the expansion
\begin{align*}\label{normal coordinate}
\gamma_{ij} &= \delta_{ij} - \frac{1}{3} R_{ikjl} x^k x^l - \frac{1}{6} R_{ikjl;m} x^k x^l x^m \\
&\quad + \lt( -\frac{1}{20} R_{ikjl;mn} + \frac{2}{45} R_{ikpl} R_{jmpn} \rt) x^k x^l x^m x^n + O(r^5).
\end{align*}
Here the Riemann curvature tensor and its covariant derivatives on the right-hand side are evaluated at $p$. See \cite[(5.4)]{LP}

We compute the induced metric of the geodesic sphere of radius $r$, $\pl B_r(p)$, 
\begin{align*}
\sigma_{ab} = r^2 \tilde\sigma_{ab} - \frac{1}{3} r^4 R_{ikjl} X^k X^l \pl_a X^i \pl_b X^j + O(r^5)
\end{align*} 
where $x^i = r X^i$ and $\tilde\sigma$ denotes the standard metric on unit sphere. More precisely, we have $X^1 = \sin\theta\cos\phi, X^2 = \sin\theta\sin\phi, X^3 = \cos\theta,$ and $\tilde\sigma = d\theta^2 + \sin^2\theta d\phi^2$. We also compute the second fundamental form
\begin{align*}
h_{ab} &= \frac{1}{2}\pl_r \sigma_{ab}\\
&= r\tilde\sigma_{ab} - \frac{2}{3} r^3 R_{ikjl} X^k X^l \pl_a X^i \pl_b X^j + O(r^4) 
\end{align*}
and the mean curvature
\begin{align*}
H = \frac{2}{r} - \frac{1}{3} r R_{kl} X^kX^l + O(r^2)
\end{align*}
using the identity 
\begin{align}
\tilde\na_a X^i \tilde \na^a X^j = \delta^{ij} - X^i X^j
\end{align}

It is more convenient to rescale the metric on $B_r(p)$ by $r^{-2}$ so that $(B_{r}(p),r^{-2}\gamma)$ has a limit $(B_{1},g_{\text{Euc}})$ as $r \rw 0$. We summarize the above consideration in the following proposition. 
\begin{prop}\label{first order data}
The rescaling of geodesic balls $(B_r(p), r^{-2}\gamma)$ give rise to a family of Bartnik data $(\sigma_r, H_r)$, $0 \le r < \epsilon$, on $S^2$
\begin{align*}
(\sigma_r)_{ab} &= \tilde\sigma_{ab} + r^2 \dot{\sigma}_{ab} + O(r^3)\\
H_r &= 2 + \dot{H} r^2 + O(r^3),
\end{align*}
where
\begin{align*}
\dot\sigma_{ab} &= -\frac{1}{3}R_{ikjl} X^iX^j \pl_a X^k \pl_b X^l\\
\dot H &= -\frac{1}{3}R_{ij}X^iX^j.
\end{align*}
Moreover, the Gauss curvature of $\sigma_r$ is given by
\begin{align*}
K = 1 + \dot K r^2 + O(r^3)
\end{align*}
where
\begin{align}
\dot K &= \frac{1}{2} \lt( - \tilde\sigma^{ab} \dot{\sigma}_{ab} + \tilde\na^a\tilde\na^b \dot{\sigma}_{ab} + \tilde\Delta (\tilde\sigma^{ab} \dot{\sigma}_{ab}) \rt) \notag\\
&= \frac{R}{2} - \frac{4}{3}R_{ij}X^iX^j \label{dot K}
\end{align}
\end{prop}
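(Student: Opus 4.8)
The rescaled data $\dot\sigma$ and $\dot H$ follow directly from the expansions of $\sigma_{ab}$, $h_{ab}$, and $H$ already displayed. Rescaling the metric by $r^{-2}$ multiplies the induced metric by $r^{-2}$ and the mean curvature by $r$, so that $(\sigma_r)_{ab} = r^{-2}\sigma_{ab}$ and $H_r = rH$; reading off the $r^2$ coefficients gives $\dot\sigma_{ab} = -\frac13 R_{ikjl}X^kX^l\pl_a X^i\pl_b X^j$ and $\dot H = -\frac13 R_{ij}X^iX^j$. The index placement for $\dot\sigma$ in the statement is the same expression after relabelling the dummy indices via the pair symmetry $R_{ikjl}=R_{jlik}$ and the antisymmetries. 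The one point worth checking is that in forming $H = \sigma^{ab}h_{ab}$ the order-$r^2$ correction to the inverse metric $\sigma^{ab}$ contributes and combines with the correction to $h_{ab}$, the two contributions $-\frac23$ and $+\frac13$ (in units of $R_{ij}X^iX^j$) summing to the stated $-\frac13$.

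For the Gauss curvature I would first establish the displayed linearisation identity and then evaluate it. Writing $g_t = \tilde\sigma + t\,\dot\sigma$ and differentiating the scalar curvature at $t=0$, the standard variation formula $\frac{d}{dt}R[g_t] = -\tilde\Delta(\tilde\sigma^{ab}\dot\sigma_{ab}) + \tilde\na^a\tilde\na^b\dot\sigma_{ab} - \tilde R^{ab}\dot\sigma_{ab}$ specialises on the round unit sphere, where $R = 2K$, the Ricci tensor is $\tilde R_{ab} = \tilde\sigma_{ab}$, and $\frac{d}{dt}R = 2\dot K$. This reproduces the first displayed equality for $\dot K$, the sign of the Laplacian term reflecting the convention in which $\tilde\Delta$ has nonnegative spectrum on $S^2$.

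It then remains to evaluate the three terms. The trace is immediate from the identity $\tilde\na_a X^i\tilde\na^a X^j = \delta^{ij}-X^iX^j$ together with the antisymmetry $R_{ikjl}X^iX^k = 0$, giving $\tilde\sigma^{ab}\dot\sigma_{ab} = -\frac13 R_{ij}X^iX^j$; its Laplacian is computed by splitting $R_{ij}X^iX^j$ into its constant part $\frac13 R$ and a degree-two spherical harmonic, on which $\tilde\Delta$ acts by the eigenvalue $6$. The genuinely laborious term is the double divergence $\tilde\na^a\tilde\na^b\dot\sigma_{ab}$. Here I would repeatedly use the Hessian identity $\tilde\na_a\tilde\na_b X^i = -X^i\tilde\sigma_{ab}$ and differentiate $R_{ikjl}X^kX^l$; in the first divergence two of the three resulting terms vanish because $R_{ikjl}X^kX^l$ is annihilated by contraction with $X^i$ or $X^j$, leaving $\tilde\na^b\dot\sigma_{ab} = \frac13 R_{il}X^l\tilde\na_a X^i$, and a second divergence (using $\tilde\na^a e^i_a = -2X^i$ and $\tilde\na^a(X^l e^i_a) = \delta^{il}-3X^iX^l$) produces $\frac13 R - R_{ij}X^iX^j$. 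Summing the three contributions yields $\dot K = \frac R2 - \frac43 R_{ij}X^iX^j$.

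The main obstacle is precisely the bookkeeping in the double-divergence term, where one must correctly track which curvature contractions collapse to a Ricci trace and which vanish by antisymmetry. As an independent check on the final coefficient I would re-derive it through the Gauss equation $2K = R_M - 2\mbox{Ric}(\nu,\nu) + H^2 - |A|^2$: substituting the ambient scalar and Ricci curvatures at $p$ together with the already-computed $H$ and $|A|^2 = \sigma^{ac}\sigma^{bd}h_{ab}h_{cd}$, the $r^{-2}$ terms cancel and the surviving order-one terms reproduce $\dot K$ after rescaling by $r^2$. This route avoids the covariant-derivative computation entirely and serves as a reassuring cross-check.
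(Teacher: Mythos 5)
Your proposal is correct and follows essentially the same route as the paper: one reads $\dot\sigma_{ab}$ and $\dot H$ off the normal-coordinate expansions (including the order-$r^2$ correction to $\sigma^{ab}$ when tracing $h_{ab}$), and obtains $\dot K$ from the standard linearization of scalar curvature evaluated via $\tilde\na_a X^i \tilde\na^a X^j = \delta^{ij}-X^iX^j$ and $\tilde\Delta X^i = -2X^i$; your intermediate values $\tilde\sigma^{ab}\dot\sigma_{ab}=-\frac13 R_{ij}X^iX^j$, $\tilde\na^b\dot\sigma_{ab}=\frac13 R_{il}X^l\tilde\na_a X^i$, and $\tilde\na^a\tilde\na^b\dot\sigma_{ab}=\frac13 R - R_{ij}X^iX^j$ all check out. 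Your remark on the sign of the Laplacian term is apt --- the first line of \eqref{dot K} as printed is consistent only with the nonnegative-spectrum convention for $\tilde\Delta$ (whereas Lemma \ref{Vdot} uses $\tilde\Delta\vphi_\ell=-\ell(\ell+1)\vphi_\ell$) --- but your evaluation is internally consistent and lands on the correct value $\frac{R}{2}-\frac43 R_{ij}X^iX^j$.
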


\begin{lemma}\label{scale}
Let $a>0$ be a constant. Let $(M,g)$ and $(M, \hat g = a^2 g)$ be the extensions of $(\Omega, \gamma)$ and $(\Omega, a^2 \gamma)$ respectively. Then the ADM mass of $g$ and $\hat g$ satisfy
\[ m_{ADM}(\hat g) = a \cdot m_{ADM} (g) \]
\end{lemma}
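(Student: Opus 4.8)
The plan is to appeal directly to the definition of the ADM mass as a flux integral at infinity and to track how each ingredient transforms under the homothety $\hat g = a^2 g$. Recall that for a metric $g$ which is asymptotically flat in coordinates $x^i$, the ADM mass is
\begin{align*}
m_{ADM}(g) = \frac{1}{16\pi}\lim_{\rho\rw\infty}\int_{|x|=\rho}\lt(\pl_i g_{ij} - \pl_j g_{ii}\rt)\nu^j\,dA,
\end{align*}
where $\nu$ and $dA$ are the Euclidean outward normal and area element of the coordinate sphere $\{|x|=\rho\}$. The essential point is that $\hat g_{ij} = a^2 g_{ij}$ tends to $a^2\delta_{ij}$, not $\delta_{ij}$, in the coordinates $x^i$, so these are not asymptotically flat coordinates for $\hat g$; to read off its mass one must first rescale the chart.

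First I would introduce rescaled coordinates $y^i = a x^i$. The tensorial transformation law gives $\hat g^{(y)}_{ij}(y) = g_{ij}(y/a)$, which now tends to $\delta_{ij}$, so the $y^i$ are admissible asymptotically flat coordinates for $\hat g$. I would then substitute these into the mass integral and collect scaling weights. The chain rule yields $\pl_{y^i}\hat g^{(y)}_{jk}(y) = a^{-1}(\pl_{x^i} g_{jk})(y/a)$, contributing a factor $a^{-1}$ to the integrand; the sphere $\{|y|=\rho\}$ corresponds to $\{|x|=\rho/a\}$; the Euclidean normal is unchanged, $\nu^j_y = \nu^j_x$; and the area element transforms as $dA_y = a^2\,dA_x$. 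Changing the variable of the limit from $\rho$ to $s=\rho/a$ then gives
\begin{align*}
m_{ADM}(\hat g) = \frac{1}{16\pi}\lim_{s\rw\infty}\int_{|x|=s} a^{-1}\lt(\pl_i g_{ij} - \pl_j g_{ii}\rt)\nu^j\, a^2\,dA_x = a\cdot m_{ADM}(g),
\end{align*}
as claimed.

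I expect no conceptual obstacle: the argument is pure bookkeeping of the scaling weights—$a^{-1}$ from the derivative, $a^2$ from the area element, none from the normal—which combine to the single factor $a$. The only points meriting a word of care are (i) verifying that the rescaled chart $y^i$ restores the decay assumed in the definition of $\mathcal{PM}$, which is immediate since $g_{ij}(y/a)-\delta_{ij}=O(|y|^{-1})$ with the stated bounds on derivatives, and (ii) observing that $\hat g$ is a genuine extension of $(\Omega, a^2\gamma)$, its scalar curvature $R_{\hat g}=a^{-2}R_g$ remaining nonnegative, so that $m_{ADM}(\hat g)$ is well defined. As a consistency check, in the static case one may instead invoke the Komar-type formula \eqref{mass integral formula}: the Christoffel symbols are unchanged under a constant rescaling, so the same function $V$ is a static potential for both metrics, while the unit normal scales as $\hat\nu = a^{-1}\nu$ and the boundary area element as $a^2\,dA$, reproducing the factor $a$. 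The coordinate computation above, however, is self-contained and applies to arbitrary extensions.
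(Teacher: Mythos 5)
Your proposal is correct and follows essentially the same route as the paper: rescale the chart by $a$, observe that the metric components are unchanged while the coordinate derivative contributes $a^{-1}$ and the area element $a^{2}$, and combine to get the factor $a$ in the ADM flux integral. The extra remarks (decay of the rescaled chart, the Komar-formula consistency check) are fine but not needed.
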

\begin{proof}
Let $y$ be an asymptotically flat coordinate of $g$. Then $\hat y = a y$ is an asymptotically flat coordinate of $\hat g$. We have
\begin{align*}
\hat g_{ij} = \hat g \lt( \frac{\pl}{\pl \hat y^i}, \frac{\pl}{\pl \hat y^j} \rt) = g_{ij}
\end{align*}
and hence 
\[ \frac{\pl \hat g_{ij}}{\pl \hat y^k} = a^{-1} \frac{\pl g_{ij}}{\pl y^k}. \]
Moreover, we have
\[ d \hat S^i = \frac{\pl}{\pl \hat y^i} \lrcorner \lt( d\hat y^1 \wedge d\hat y^2 \wedge d\hat y^3 \rt) = a^2 d S^i \]
and the assertion follows from the definition of ADM mass \[ m_{ADM}(g) = \frac{1}{16\pi} \lim_{|y|\rw\infty} \int \sum_{j=1}^3 \lt( \frac{\pl g_{ij}}{\pl y^j} - \frac{\pl g_{jj}}{\pl y^i} \rt) dS^i. \] 
\end{proof}

The following theorem is the main result of this paper which, by Lemma \ref{scale}, is equivalent to \eqref{small sphere limits}.
\begin{theorem}\label{main}
Let $g_r$ be the static extension of $(B_r(p), r^{-2}\gamma)$ constructed in Anderson \cite{A1}. Then we have the expansion
\begin{align*}
m_{ADM}(g_r) = \frac{R}{12} r^2 + \lt( \frac{1}{72} |\mbox{Rc}|^2 - \frac{5}{432}R^2 + \frac{1}{120}\Delta R \rt)r^4 + O(r^5).
\end{align*} 
\end{theorem}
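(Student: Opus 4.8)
The plan is to turn the Komar-type formula \eqref{mass integral formula} into an integral over $\pl M$ of quantities that are either prescribed by the Bartnik data or else governed by the static equations, and then to expand in $r$. Since the static extension satisfies $\Delta_g V = 0$, tracing the static equation $\mbox{Hess}_g V = V\,\mbox{Ric}(g)$ gives $0 = \Delta_g V = V R_g$, so the scalar curvature of $g_r$ vanishes. Feeding the static equation and $\Delta_g V = 0$ into the boundary decomposition of the Laplacian quoted in the introduction yields
\begin{align*}
\frac{\pl V}{\pl \nu} = -\frac{1}{H}\lt( \Delta_{\pl M} V + V\,\mbox{Ric}(\nu,\nu) \rt).
\end{align*}
The Gauss equation for $\pl M \subset (M,g_r)$, combined with $R_g=0$, expresses the normal Ricci curvature purely through boundary data: writing $h = \frac{H}{2}\sigma + \hat h$ with $\hat h$ the trace-free second fundamental form, one has $\mbox{Ric}(\nu,\nu) = \det h - K = \frac{H^2}{4} - \frac12 |\hat h|^2 - K$. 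Substituting into \eqref{mass integral formula} gives the master formula
\begin{align*}
m_{ADM}(g_r) = -\frac{1}{4\pi}\int_{\pl M} \frac{1}{H}\lt( \Delta_{\pl M} V + V\Big( \frac{H^2}{4} - \frac12 |\hat h|^2 - K \Big) \rt) dA.
\end{align*}
The only quantity here not determined by the Bartnik data $(\sigma_r, H_r)$ is the trace-free part $\hat h$, which is a genuine unknown of the static problem.

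Next I would extract the leading order. At $r=0$ the integrand vanishes identically ($H=2$, $V=K=1$, $\hat h=0$), so the mass begins at order $r^2$. Because $\hat h = O(r^2)$, the term $|\hat h|^2 = O(r^4)$ does not enter at this order, and the $O(r^2)$ part of the integrand involves only $\dot V$, $\dot H$, and $\dot K$ from Proposition \ref{first order data}. The $\Delta_{\pl M} V$ contribution integrates to zero, and the $O(r^2)$ part of the mass reduces to $-\frac{1}{8\pi}\int_{S^2}(\dot H - \dot K)\, dA_{\tilde\sigma}$; with $\dot H - \dot K = R_{ij}X^iX^j - \frac{R}{2}$ and the moment $\int_{S^2} X^iX^j\, dA_{\tilde\sigma} = \frac{4\pi}{3}\delta^{ij}$ this produces the coefficient $\frac{R}{12}$. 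The same computation gives the $r^4$ coefficient in the special case that the Riemann tensor vanishes at $p$, since then $\hat h$ and all curvature-quadratic terms drop out and only the $\Delta R$ contribution survives; this is the content of Section 3.

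The general $r^4$ coefficient is where the real work lies, and the main obstacle is that $|\hat h|^2$ now contributes at the relevant order, so one must actually determine the trace-free second fundamental form of the static boundary to leading order together with the static potential. My plan follows Section 4.1: solve the linearized static system for $\dot V$ by decomposing into spherical harmonics (the linearized solution being, as in \cite{W1}, a harmonic conformal factor plus a diffeomorphism), and recover the leading trace-free part $\dot{\hat h}$ from the Codazzi equation, in which the tangential components of the ambient Ricci curvature are replaced via $\mbox{Ric} = \mbox{Hess}_g V / V$ by derivatives of $V$ and the Bartnik data. With $\dot V$ and $\dot{\hat h}$ in hand, the remainder is the bookkeeping of all quadratic contributions to the integrand at order $r^4$ --- the cross terms among $V$, $H$, $K$, the area element, and the new $|\hat h|^2$ term --- followed by integration over $S^2$ using the fourth-moment identities for $\int_{S^2} X^iX^jX^kX^l\, dA_{\tilde\sigma}$. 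The delicate points are isolating the spherical-harmonic modes that survive integration and assembling the curvature contractions into the combination $\frac{1}{72}|\mbox{Rc}|^2 - \frac{5}{432}R^2 + \frac{1}{120}\Delta R$; this computation is carried out in Sections 4.2 and 4.3.
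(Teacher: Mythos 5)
Your proposal is correct and follows essentially the same route as the paper: the Komar-type formula \eqref{mass integral formula}, the boundary decomposition of $\Delta_g V$ combined with the static equation and the Gauss equation (your ``master formula'' with $\mbox{Ric}(\nu,\nu)=\det h - K$ is just an explicit repackaging of the paper's identity $\dot K = \tilde\Delta\dot V + 2(\pl V/\pl\nu)^{\bigcdot} + \dot H$ and its second-order analogue), followed by solving $\dot V$ via spherical harmonics and $\dot h$ via Codazzi exactly as in Section 4.1. The leading-order extraction and the identification of $|\hat h|^2$ as the only quantity not determined by the Bartnik data are both accurate, so the remaining work is the same bookkeeping the paper performs in Sections 4.2--4.3.
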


Our proof relies on the following well-known formula for ADM mass of static manifolds in terms of the static potential.
\begin{lemma}\label{static mass}
The ADM mass of an asymptotically flat static manifold is equal to $\frac{1}{4\pi} \int_\Sigma \frac{\pl V}{\pl \nu} d\sigma$ where $\Sigma$ is any surface that bounds a domain with a large coordinate sphere.
\end{lemma}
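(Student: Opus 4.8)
The plan is to reduce the flux integral to a limit over large coordinate spheres and then to evaluate it by combining the static equations with the decomposition of the Laplacian recorded in the introduction. First I would note that the integrand is a genuine flux: since $\Delta_g V = 0$, the field $\nabla V$ is divergence free, so for any two admissible surfaces the divergence theorem applied to the region they bound shows $\frac{1}{4\pi}\int_\Sigma \frac{\pl V}{\pl \nu}\,d\sigma$ to be independent of $\Sigma$. It therefore suffices to take $\Sigma = S_\rho$, a large coordinate sphere of radius $\rho$, and compute the limit as $\rho \rw \infty$; this also justifies the phrase ``any surface'' in the statement.

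Next I would feed the decomposition $\Delta_g V = \Delta_{S_\rho} V + \mbox{Hess}_g V(\nu,\nu) + H_\rho \frac{\pl V}{\pl \nu}$ into $\Delta_g V = 0$, multiply by the constant $\rho$, and integrate over $S_\rho$. The tangential term integrates to zero on the closed surface $S_\rho$, while $\rho\,\mbox{Hess}_g V(\nu,\nu) = \mbox{Hess}_g V(\rho\nu,\nu)$ may be rewritten using the static equation $\mbox{Hess}_g V = V\,\mbox{Ric}(g)$. Since $\rho\nu$ agrees with the Euclidean position field $Y = x^i \pl_i$ up to terms that vanish in the limit, and since $V \rw 1$ and $\rho H_\rho \rw 2$ under the asymptotic flatness decay, this yields
\[ 2 \lim_{\rho \rw \infty} \int_{S_\rho} \frac{\pl V}{\pl \nu}\,d\sigma = - \lim_{\rho \rw \infty} \int_{S_\rho} \mbox{Ric}(Y,\nu)\,d\sigma. \]
Here I use that the trace of the static equation forces $R \equiv 0$, so it is the Ricci tensor that governs the relevant $|x|^{-3}$ asymptotics. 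In other words, the Komar-type flux has been converted into a Ricci flux at infinity.

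The remaining and decisive step is to identify $-\frac{1}{8\pi}\lim_\rho \int_{S_\rho}\mbox{Ric}(Y,\nu)\,d\sigma$ with $m_{ADM}(g)$, i.e.\ to prove the purely Riemannian identity $m_{ADM}(g) = -\frac{1}{8\pi}\lim_\rho \int_{S_\rho} \mbox{Ric}(Y,\nu)\,d\sigma$ for scalar-flat asymptotically flat metrics. This is where I expect the real work to lie: a naive divergence theorem fails because the position field is only defined asymptotically and the interior Christoffel contributions do not obviously cancel. To resolve it I would pass to harmonic coordinates for $g$, in which asymptotic flatness upgrades to an expansion $g_{ij} = \delta_{ij} + O(|x|^{-1})$ whose leading Ricci tensor takes the explicit Schwarzschildean form $m(|x|^2\delta_{ij} - 3 x_i x_j)/|x|^5$; granting this expansion the flux integrates directly to $m_{ADM}(g)$. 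Equivalently, one may establish the classical asymptotics $V = 1 - m_{ADM}(g)/|x| + O(|x|^{-2})$ of the static potential and read the limit off immediately. The content of the lemma is precisely that the monopole coefficient of $V$ equals the mass extracted from the metric, and the leading-order static equation $\mbox{Ric}_{ij} = \pl_i\pl_j V + o(|x|^{-3})$ is what forces this coincidence.
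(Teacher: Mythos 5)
Your argument is correct in outline but takes a genuinely different route to the key identification. The paper simply cites Bunting--Masood-ul-Alam \cite{BM} for the expansion $V = 1 - \frac{m}{|y|} + O(|y|^{-2})$ with $m = m_{ADM}(g)$, reads off $\frac{1}{4\pi}\lim_{s\rw\infty}\int_{|y|=s}\frac{\pl V}{\pl\nu} = m$ from that expansion, and then uses harmonicity of $V$ to transfer the flux to an arbitrary admissible $\Sigma$ --- the same divergence-theorem step you put first. Where you diverge is in evaluating the limit: instead of invoking the asymptotics of $V$, you use the decomposition of $\Delta_g V$ together with $\mathrm{Hess}_g V = V\,\mathrm{Ric}$ to convert the Komar flux into the Einstein-tensor flux $-\frac{1}{8\pi}\lim_\rho\int_{S_\rho}\mathrm{Ric}(Y,\nu)\,d\sigma$ (using $R\equiv 0$ from the traced static equation), and then appeal to the purely Riemannian identity equating that flux with $m_{ADM}(g)$. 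That identity is indeed a theorem (Miao--Tam, \emph{Evaluation of the ADM mass and center of mass via the Ricci tensor}; see also Ashtekar--Hansen and Herzlich), valid for general asymptotically flat metrics with integrable scalar curvature, so your route is sound and in some sense more robust: it does not use the full strength of the static asymptotic expansion, only the pointwise static equations near infinity. The trade-off is that you have not actually discharged the decisive step --- you correctly flag it as ``where the real work lies'' and offer either harmonic coordinates or the classical expansion of $V$ to close it, and the latter option is literally the paper's proof. So the work has been relocated rather than eliminated: you must either prove or cite the Ricci-flux formula for the ADM mass (a clean, citable statement), or fall back on the \cite{BM} expansion as the paper does. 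Your error-term bookkeeping in the intermediate limit (e.g.\ $\rho H_\rho \rw 2$ against $\frac{\pl V}{\pl\nu} = O(\rho^{-2})$ over area $O(\rho^2)$) is fine under the paper's decay assumptions, and your Schwarzschildean leading form of the Ricci tensor checks out against $V = 1 - \frac{m}{|x|}$.
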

\begin{proof}
(see also Corollary 4.2.4 of \cite{C}) By Remark 2 of \cite{BM}, there exists a coordinate system $y^\alpha$ in which the metric has expansion
\[ \] and the static potential has the expansion
\[ V = 1 - \frac{m}{|y|} + O(|y|^{-2}). \]
In particular, the constant $m$ is equal to the ADM mass. One readily computes that for large coordinate spheres
\begin{align*}
m = \frac{1}{4\pi} \lim_{s \rw \infty} \int_{|y|=s} \frac{\pl V}{\pl\nu} d\sigma.
\end{align*}
Since $V$ is harmonic, we have
\begin{align*}
\frac{1}{4\pi} \int_{\Sigma} \frac{\pl V}{\pl \nu} d\sigma =m
\end{align*}
for any surface $\Sigma$ that bounds a domain with a large coordinate sphere.
\end{proof}

\section{First order computation}
In the rest of this paper, $\int_{S^2} f$ (without mentioning the area form) refers to integrals on $S^2$ with respect to the standard metric $\tilde\sigma$ of unit sphere.
\subsection{}
Let $(M = \R^3 - B_1, g_r, V_r)$ be the static extension of $(\sigma_r, H_r)$ where $g_0$ is the Euclidean metric and $V_0 = 1$. Let $m_r$ be the ADM mass of $g_r$ with Taylor expansion
\[ m_r = \dot m r^2 + \ddot m r^4 + O(r^5). \]

\begin{theorem}
\begin{align*}
\dot m = \frac{R}{12}.
\end{align*}
\end{theorem}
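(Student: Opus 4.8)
The plan is to compute $\dot m$ directly from the mass formula $m_r = \frac{1}{4\pi}\int_{S^2} \frac{\pl V_r}{\pl \nu}$ of Lemma \ref{static mass} by extracting the $r^2$ coefficient of the integrand. The starting point is the decomposition
\[ \Delta_{g_r} V_r = \Delta_{\sigma_r} V_r + \mbox{Hess}_{g_r} V_r(\nu,\nu) + H_r \frac{\pl V_r}{\pl \nu}, \]
which holds on $\pl M = S^2$. Since $V_r$ is harmonic, the left side vanishes, so $H_r \frac{\pl V_r}{\pl \nu} = -\Delta_{\sigma_r} V_r - \mbox{Hess}_{g_r} V_r(\nu,\nu)$. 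The static equation $\mbox{Hess}_{g_r} V_r = V_r\, \mbox{Ric}(g_r)$ lets me replace the Hessian term by $V_r \, \mbox{Ric}(g_r)(\nu,\nu)$, and the trace-free part of the static equation combined with the Gauss equation should express $\mbox{Ric}(g_r)(\nu,\nu)$ in terms of the scalar curvature of $g_r$ (which vanishes, as $g_r$ is scalar-flat) and the Bartnik data $K$, $H_r$, and the second fundamental form. This is the mechanism flagged in the introduction for relating the normal Hessian to the Bartnik data.

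Next I would insert the expansions from Proposition \ref{first order data}: $V_r = 1 + \dot V r^2 + O(r^3)$, $H_r = 2 + \dot H r^2 + O(r^3)$ with $\dot H = -\frac{1}{3}R_{ij}X^iX^j$, and $K = 1 + \dot K r^2 + O(r^3)$ with $\dot K = \frac{R}{2} - \frac{4}{3}R_{ij}X^iX^j$. At leading order $\frac{\pl V_r}{\pl \nu}$ is $O(r^2)$ since $V_0 \equiv 1$, so I write $\frac{\pl V_r}{\pl \nu} = \dot q\, r^2 + O(r^3)$ and solve for the coefficient $\dot q$ of the normal derivative from the linearized version of the identity above. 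The linearization of $H_r \frac{\pl V_r}{\pl \nu}$ at $r=0$ is just $2\,\dot q\, r^2$ (the product of $\dot H$ with $\frac{\pl V_0}{\pl \nu}=0$ drops out), while $\Delta_{\sigma_r}V_r$ linearizes to $\tilde\Delta \dot V\, r^2$, and the Hessian/Ricci term linearizes to the $r^2$-coefficient of $V_r\,\mbox{Ric}(g_r)(\nu,\nu)$. Using the Gauss equation to turn $\mbox{Ric}(g_r)(\nu,\nu)$ into a combination of Gauss curvature $K$, mean curvature, and $|h|^2$, I obtain $\dot q$ as an explicit function on $S^2$ built from $\dot K$, $\dot H$, and $\tilde\Delta \dot V$.

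Finally I would integrate: $\dot m = \frac{1}{4\pi}\int_{S^2} \dot q$. The term $\tilde\Delta \dot V$ integrates to zero over the closed sphere, so the linearization of $\dot V$ itself never needs to be solved for—this is precisely why the $r^3$ (equivalently $r^2$ after rescaling) coefficient is accessible without the harder analysis reserved for Section 4. What survives is the integral of a quadratic expression in the curvature against $X^iX^j$; using the standard averages $\int_{S^2} X^iX^j = \frac{4\pi}{3}\delta^{ij}$, the angular factors $R_{ij}X^iX^j$ average to $\frac{4\pi}{3}R = \frac{4\pi}{3}\cdot\frac{R}{1}$ traced appropriately, collapsing everything to a multiple of $R$. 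The main obstacle, and the step requiring the most care, is correctly assembling $\mbox{Ric}(g_r)(\nu,\nu)$ from the Gauss equation and the trace-free static equation with the right numerical coefficients, since sign and factor errors there propagate directly into $\dot m$; once that algebra is pinned down, the spherical averaging yields $\dot m = \frac{R}{12}$.
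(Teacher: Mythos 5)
Your proposal follows the same route as the paper: combine $0=\Delta_g V = \Delta_{\pl M}V + \mathrm{Hess}_g V(\nu,\nu) + H\frac{\pl V}{\pl\nu}$ with the static equation and the Gauss equation to express $\left(\frac{\pl V}{\pl\nu}\right)^{\bigcdot}$ through $\dot K$, $\dot H$, and $\tilde\Delta\dot V$, then integrate and use $\int_{S^2}X^iX^j=\frac{4\pi}{3}\delta^{ij}$. The plan is correct, including the key observation that $\tilde\Delta\dot V$ integrates away so $\dot V$ never needs to be solved for; the only detail left implicit is the expansion $|h|^2_\sigma = 2+2\dot H r^2+O(r^3)$, which the paper likewise dispatches as a direct computation.
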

\begin{proof}
Recall the Gauss equation
\begin{align*}
2K &= - 2 Ric(\nu,\nu) + H^2 - |h|^2_\sigma 
\end{align*}
and the identity 
\begin{align*}
0=\Delta_g V = \Delta_{\pl M} V + \mbox{Hess}_g V (\nu,\nu) + H \frac{\pl V}{\pl \nu}
\end{align*}
on static manifolds. The two are related by the static equation $V Ric(\nu,\nu) = \mbox{Hess}_g V(\nu,\nu)$.

We have $V = 1 + \dot{V}r^2 + O(r^3)$ and hence
\begin{align*}
\Delta_{\pl M} V &= \tilde\Delta \dot{V} r^2 + O(r^3)\\
H \frac{\pl V}{\pl\nu} &= 2 \lt( \frac{\pl V}{\pl \nu} \rt)^{\bigcdot} r^2 + O(r^4).
\end{align*}
Moreover, direct computation yields $|h|^2_\sigma = 2 + 2\dot{H}r^2 + O(r^3)$. Comparing the coefficients of $r^2$, we get 
\[ \dot{K} = \tilde\Delta \dot{V} + 2\lt( \frac{\pl V}{\pl \nu} \rt)^{\bigcdot} + \dot{H}. \] and integrate over $\pl M$ to get
\begin{align*}
\dot{m} = \frac{1}{8\pi} \int_{S^2} \dot{K} - \dot{H} = \frac{1}{12} R
\end{align*}
where we used the identity \begin{align}
\int_{S^2} X^i X^j = \frac{4\pi}{3}\delta^{ij}
\end{align} 
\end{proof}

\subsection{Contribution of $\na R$.}
Now we assume that $R_{ijkl}(p)=0$. As term $R_{ikjl;m}$ gives an odd function which integrates to zero and does not contribute to mass, we ignore it. Therefore, we assume
\begin{align*}
(\sigma_r)_{ab} =  \tilde\sigma_{ab} - \frac{r^4}{20}R_{ikjl;mn} X^k X^lX^m X^n \pl_a X^i \pl_b X^j + O(r^5)
\end{align*} 
and
\begin{align*}
H = 2 + r^4 (-\frac{1}{10})R_{kl;mn} X^kX^lX^m X^n + O(r^5). 
\end{align*}

Then the ADM mass has expansion $m = \dot m r^4 + O(r^5)$ and we have\begin{align*}
\dot m &= \frac{1}{8\pi} \int_{S^2} \dot K - \dot H\\
&= \frac{1}{8\pi} \int_{S^2} (-\frac{1}{40} - \frac{1}{10}) R_{kl;mn} X^kX^lX^m X^n \\
&= \frac{1}{8\pi} (-\frac{1}{8}) \frac{4\pi}{15} R_{kl;mn} (\delta^{kl}\delta^{mn} + \delta^{km}\delta^{ln} + \delta^{kn}\delta^{lm})\\
&= -\frac{1}{120}\Delta R
\end{align*}
where we used the identity
\begin{align}
\frac{1}{4\pi}\int_{S^2} X^kX^lX^mX^n = \frac{1}{15}(\delta^{kl}\delta^{mn} + \delta^{km}\delta^{ln} + \delta^{kn}\delta^{lm}) 
\end{align} in the third equality and the second Bianchi identity in the last equality.

\section{Second order computation}
As the contribution of $\na R$ has been handled, we may assume that all covariant derivatives of the Riemann curvature tensor vanish at $p$ in \eqref{normal coordinate}. The results obtained under this assumption can be superposed with that of the previous subsection to yield the desired $r^5$ expansion.

In this section, we raise/lower indices, take trace and norm of tensors on $S^2$ with respect to $\tilde\sigma$. For example, $\tr \dot\sigma = \tilde\sigma^{ab} \dot\sigma_{ab}$ and $|\dot h|^2 = \tilde\sigma^{ab}\tilde\sigma^{cd} \dot h_{ac} \dot h_{bd}$. 
\subsection{Solve first variation data $\dot V$ and $\dot h_{ab}$ explicitly}
\begin{lem}\label{Vdot}
On $\pl M$, we have
\begin{align*}
\dot V = -\frac{R}{12} + \frac{1}{12} R_{ij} \lt( X^iX^j - \frac{1}{3}\delta^{ij} \rt) = -\frac{R}{9} + \frac{1}{12}R_{ij}X^iX^j.
\end{align*}
and
\begin{align*}
\left( \frac{\partial V}{\partial \nu} \right)^{\bigcdot} = \frac{R}{12} - \frac{1}{4} R_{ij} \lt( X^iX^j - \frac{1}{3}\delta^{ij} \rt) = \frac{R}{6}-\frac{1}{4}R_{ij}X^iX^j
\end{align*}
\end{lem}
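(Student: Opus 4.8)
The plan is to determine both $\dot V|_{\pl M}$ and $\left(\frac{\pl V}{\pl\nu}\right)^{\bigcdot}$ from two independent facts: the scalar boundary identity already isolated in the first-order computation, and the fact that $\dot V$ is the linearization of a harmonic potential, hence itself flat-harmonic on $M$. The two together overdetermine nothing, because harmonicity supplies a Dirichlet-to-Neumann relation that couples the Dirichlet and Neumann data mode by mode, turning the single scalar identity into a solvable linear system on each spherical-harmonic sector.

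First I would linearize the static equation $\Delta_{g_r}V_r=0$ at $r=0$. Since $V_0\equiv 1$ and $\Delta_g$ annihilates constants for every metric $g$, the coefficient of $r^2$ in $\Delta_{g_r}V_r$ reduces to $\Delta_{g_0}\dot V$, and as $g_0$ is Euclidean this is the flat Laplacian; thus $\dot V$ is harmonic on $M=\R^3\setminus B_1$ and decays at infinity because $V_r\to 1$. Expanding in exterior spherical harmonics, $\dot V=\sum_{\ell}c_\ell\,|x|^{-(\ell+1)}\phi_\ell$ with $\phi_\ell$ of degree $\ell$, yields the Dirichlet-to-Neumann relation: with $\nu=\pl_r$ (the orientation fixing $H=2>0$), the $\ell$-mode of $\left(\frac{\pl V}{\pl\nu}\right)^{\bigcdot}$ equals $-(\ell+1)$ times the $\ell$-mode of $\dot V|_{\pl M}$. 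Here I would note that to this order the Neumann coefficient is computed with the flat metric and flat normal, since the $O(r^2)$ corrections to $g_r$ and to $\nu$ multiply $V-1=O(r^2)$ and contribute only at order $r^4$.

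Next I would substitute this relation into the boundary identity from the first-order computation, namely $\tilde\Delta\dot V+2\left(\frac{\pl V}{\pl\nu}\right)^{\bigcdot}=\dot K-\dot H$, with $\dot K,\dot H$ taken from Proposition \ref{first order data}. Writing $R_{ij}X^iX^j=\tfrac{R}{3}+R_{ij}\!\left(X^iX^j-\tfrac13\delta^{ij}\right)$ to separate the $\ell=0$ and $\ell=2$ parts, the right-hand side becomes $\tfrac{R}{6}-R_{ij}\!\left(X^iX^j-\tfrac13\delta^{ij}\right)$, which involves only $\ell=0$ and $\ell=2$ harmonics. On the $\ell$-sector the operator $\tilde\Delta+2\cdot(\text{DtN})$ acts as multiplication by $-\ell(\ell+1)-2(\ell+1)=-(\ell+1)(\ell+2)$, nonzero for every $\ell$, so the equation is solvable sector by sector and forces all modes of $\dot V$ outside $\ell\in\{0,2\}$ to vanish. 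Solving $-2A=\tfrac{R}{6}$ at $\ell=0$ and $-12W=-R_{ij}\!\left(X^iX^j-\tfrac13\delta^{ij}\right)$ at $\ell=2$ gives $\dot V|_{\pl M}=-\tfrac{R}{12}+\tfrac1{12}R_{ij}\!\left(X^iX^j-\tfrac13\delta^{ij}\right)$, and the Dirichlet-to-Neumann relation then returns $\left(\frac{\pl V}{\pl\nu}\right)^{\bigcdot}=-A-3W=\tfrac{R}{12}-\tfrac14 R_{ij}\!\left(X^iX^j-\tfrac13\delta^{ij}\right)$, which are the two asserted expressions (the alternate forms follow by reabsorbing the trace).

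I expect the main subtlety to lie not in the algebra, which is a two-mode linear solve, but in justifying the structural inputs: that the linearized potential is genuinely flat-harmonic and decaying, so the exterior spherical-harmonic expansion and the clean factor $-(\ell+1)$ are legitimate, and that the orientation of $\nu$ is fixed consistently so that this sign, the sign in $\dot H$, and the sign in the mass formula all match. Once these conventions are locked in, the computation is routine.
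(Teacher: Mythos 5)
Your proposal is correct and follows essentially the same route as the paper: flat-harmonicity of $\dot V$ on the exterior region, an exterior spherical-harmonic expansion giving the Dirichlet-to-Neumann factor $-(\ell+1)$, and a mode-by-mode solve of $\tilde\Delta\dot V+2\bigl(\tfrac{\pl V}{\pl\nu}\bigr)^{\bigcdot}=\dot K-\dot H$ on the $\ell=0$ and $\ell=2$ sectors. Your added justifications (why $\dot V$ is genuinely flat-harmonic since $V_0\equiv 1$, and why the flat normal suffices at this order) are points the paper leaves implicit, but the argument is the same.
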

\begin{proof}
Since $\Delta \dot V =0$, we can expand it into spherical harmonics on $\R^3 - B_1$:
\begin{align*}
\dot V = \sum_{\ell=1}^\infty \frac{\vphi_{\ell-1}}{s^l}
\end{align*}
where $\vphi_\ell$ satisfies $\tilde\Delta \vphi_{\ell} = -\ell(\ell + 1) \vphi_{\ell}$.

On $\pl M$, we have $\tilde\Delta \dot V + 2 \frac{\pl \dot V}{\pl s} = \dot K - \dot H$. Comparing the left-hand side and the right-hand side 
\[ -2 \vphi_0 + \sum_{\ell\ge 2} -\ell(\ell+2) \vphi_\ell  = \frac{R}{2} - R_{ij} X^iX^j = \frac{R}{6} - R_{ij} \lt( X^iX^j - \frac{1}{3}\delta^{ij} \rt), \]
the equation of $\dot V$ follows. The equation of 
$\lt( \frac{\pl V}{\pl \nu} \rt)^{\bigcdot}$ follows from the fact that
\[ \lt( \frac{\pl V}{\pl \nu} \rt)^{\bigcdot} =\lt.  \frac{\pl \dot V}{\pl s} \rt|_{s=1}\]
on $\pl M$.
\end{proof}

\begin{lem}\label{hdot}
\begin{align*}
\dot h_{ab} = -\frac{1}{3}R_{ij}X^iX^j \tilde\sigma_{ab} + \tilde\na_a\tilde\na_b \mathrm{k} - \frac{1}{2}\tilde\Delta \mathrm{k} \tilde\sigma_{ab}
\end{align*}
where 
\begin{align*}
\mathrm{k} = \frac{1}{12} R_{ij}\lt( X^iX^j - \frac{1}{3}\delta^{ij}\rt)
\end{align*}
\end{lem}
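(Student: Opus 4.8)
The plan is to determine $\dot h_{ab}$ from its $\tilde\sigma$-trace and its $\tilde\sigma$-divergence, and then to appeal to the rigidity of symmetric trace-free two-tensors on the round sphere. Since the static extension realizes the Bartnik data, I first linearize $H=\sigma^{ab}h_{ab}$ about $r=0$, where $h_{ab}=\tilde\sigma_{ab}+O(r^2)$ and $\sigma^{ab}=\tilde\sigma^{ab}-r^2\dot\sigma^{ab}+O(r^3)$. This gives $\dot H=\tr\dot h-\tr\dot\sigma$, hence $\tr\dot h=\dot H+\tr\dot\sigma=-\tfrac23 R_{ij}X^iX^j$, using $\tr\dot\sigma=-\tfrac13 R_{ij}X^iX^j=\dot H$. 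Since the trace-free Hessian of $\mathrm{k}$ is trace-free and $\tfrac12\tr\dot h=-\tfrac13 R_{ij}X^iX^j$, this already accounts for the pure-trace term $-\tfrac13 R_{ij}X^iX^j\,\tilde\sigma_{ab}$ in the asserted formula.

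Next I compute the divergence from the contracted Codazzi equation on the static extension, $\na^a h_{ab}-\na_b H=\mathrm{Ric}(\nu,e_b)$, in which the right-hand side is rewritten via the static equation $\mathrm{Ric}=V^{-1}\mathrm{Hess}_g V$ together with the tangential-normal Hessian identity $\mathrm{Hess}_g V(\nu,e_b)=\na_b(\pl_\nu V)-h_b{}^{c}\na_c V$. Linearizing in $r^2$, the first variation of the intrinsic connection contributes a term $\tilde\na^a\dot\sigma_{ab}$, so the left-hand side becomes $\tilde\na^a\dot h_{ab}-\tilde\na^a\dot\sigma_{ab}-\tilde\na_b\dot H$; since $V=1+O(r^2)$ and the tangential gradient of $V$ is $O(r^2)$, the right-hand side reduces to $\tilde\na_b\lt(\frac{\pl V}{\pl\nu}\rt)^{\bigcdot}-\tilde\na_b\dot V$. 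A direct computation using $\tilde\na_a\tilde\na_b X^i=-X^i\tilde\sigma_{ab}$, $\tilde\sigma^{ab}\tilde\na_a X^i\tilde\na_b X^j=\delta^{ij}-X^iX^j$ and the Riemann symmetries gives $\tilde\na^a\dot\sigma_{ab}=\tfrac13 R_{ij}X^i\tilde\na_b X^j=2\tilde\na_b\mathrm{k}$. Feeding this in together with $\dot H$, $\dot V$ and $\lt(\frac{\pl V}{\pl\nu}\rt)^{\bigcdot}$ from Lemma~\ref{Vdot} — all of which become multiples of $\tilde\na_b\mathrm{k}$ once one writes $R_{ij}X^iX^j=12\mathrm{k}+\tfrac{R}{3}$ — yields $\tilde\na^a\dot h_{ab}=(2-4-4)\tilde\na_b\mathrm{k}=-6\tilde\na_b\mathrm{k}$.

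Finally I close the argument by rigidity. Write $\hat h_{ab}$ for the proposed right-hand side. The Bochner commutation $\tilde\na^a\tilde\na_a\tilde\na_b f=\tilde\na_b\tilde\Delta f+\tilde\na_b f$ on the unit sphere shows that the trace-free Hessian of $\mathrm{k}$ has divergence $-2\tilde\na_b\mathrm{k}$, whence $\tilde\na^a\hat h_{ab}=-4\tilde\na_b\mathrm{k}-2\tilde\na_b\mathrm{k}=-6\tilde\na_b\mathrm{k}$ and $\tr\hat h=-\tfrac23 R_{ij}X^iX^j$. Thus $\dot h-\hat h$ is symmetric, trace-free and divergence-free on $(S^2,\tilde\sigma)$, and every such tensor vanishes (there are no nontrivial transverse-traceless symmetric two-tensors on the round $S^2$; equivalently they correspond to holomorphic quadratic differentials on $\mathbb{CP}^1$, of which there are none). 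Hence $\dot h=\hat h$, as claimed.

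The main obstacle will be the careful linearization of the contracted Codazzi equation: one must correctly capture the first variation of the intrinsic Levi-Civita connection, which is precisely what produces the $\tilde\na^a\dot\sigma_{ab}$ term, as well as the variation of the unit normal inside $\mathrm{Hess}_g V(\nu,e_b)$, and keep every sign consistent with the convention $\sum_i R_{ikil}=R_{kl}$ under which $\tr\dot\sigma$ and $\dot H$ were computed. The two supporting calculations, $\tilde\na^a\dot\sigma_{ab}=2\tilde\na_b\mathrm{k}$ and the divergence of the trace-free Hessian, are routine, but it is their exact coefficients that make the trace and the divergence together suffice to pin down $\dot h$.
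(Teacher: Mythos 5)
Your proposal is correct and follows essentially the same route as the paper: the trace of $\dot h$ is read off from $\dot H$ and $\tr\dot\sigma$, and the divergence is obtained from the linearized Codazzi equation combined with the static equation and Lemma \ref{Vdot}, with all coefficients ($\tilde\na^a\dot\sigma_{ab}=2\tilde\na_b\mathrm{k}$, total divergence $-6\tilde\na_b\mathrm{k}$) matching the paper's. The only cosmetic difference is that the paper decomposes the traceless part as a trace-free Hessian plus a co-closed piece and solves $\frac{1}{2}\pl_b(\tilde\Delta+2)\mathrm{k}=-\frac16R_{ij}\pl_b(X^iX^j-\frac13\delta^{ij})$ for the potential, whereas you verify the candidate and invoke the absence of transverse-traceless tensors on the round $S^2$ --- the same uniqueness fact in different clothing.
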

\begin{proof}
We decompose $\dot h_{ab}$ into 
\begin{align*}
\dot h_{ab} = \frac{1}{2} \tr \dot h \tilde\sigma_{ab} + k_{ab}
\end{align*}
where $k_{ab}$ is traceless with respect to $\tilde\sigma$. Since $\dot H = \tilde\sigma^{ab}(\dot h_{ab} - \dot\sigma_{ab})$, we have 
\begin{align*}
\tr \dot h = -\frac{2}{3} R_{ij}X^iX^j.
\end{align*}

It is well-known (see Appendix B of \cite{KWY} for example) that $k_{ab}$ admits the decomposition
\begin{align*}
k_{ab} = \tilde\na_a\tilde\na_b \mathrm k - \frac{1}{2} \tilde\Delta \mathrm k \tilde\sigma_{ab} + \mbox{co-closed part}.
\end{align*}
 The Codazzi equation on static manifold reads 
\begin{align*}
\na^a h_{ab} - \pl_b H = Ric (\pl_b, \nu) = \frac{1}{V} \lt( \pl_b \frac{\pl V}{\pl\nu} - h_{bc} \na^c V \rt). 
\end{align*}
On $\pl M$, we have
\begin{align*}
\na^a h_{ab} = r^2  \lt( \tilde\na^a \dot h_{ab} - \tilde\na^a \dot\sigma_{ab} \rt)
\end{align*}
and 
\begin{align*}
\frac{1}{V} \lt( \pl_b \frac{\pl V}{\pl\nu} - h_{bc} \na^c V \rt) = r^2 \lt[ \pl_b  \lt( \frac{\pl V}{\pl\nu}\rt)^{\bigcdot} - \pl_b \dot V \rt].
\end{align*}
We compute $\tilde\na^a \dot\sigma_{ab} = \frac{1}{6} R_{jk} \pl_b \lt( X^jX^k\rt)$. As a result, the co-closed part of $k_{ab}$ vanishes,
\begin{align*}
\frac{1}{2}\pl_b (\tilde\Delta+2) \mathrm k = -\frac{1}{6} R_{ij} \pl_b \lt( X^iX^j - \frac{1}{3}\delta^{ij} \rt),
\end{align*}
and we obtain
\begin{align*}
\mathrm{k} = \frac{1}{12} R_{ij}\lt( X^iX^j - \frac{1}{3}\delta^{ij}\rt).
\end{align*} 
\end{proof}
\begin{lem}\label{DeltaV}
\begin{align*}
\Delta_\sigma V = r^2 \tilde\Delta \dot V + r^4 \lt( \tilde\Delta \ddot V + \dot\Delta\dot V \rt) + O(r^5)
\end{align*}
where
\begin{align*}
\dot\Delta \dot V = \frac{1}{2} (\pl_a \tr\dot\sigma ) \tilde\na^a \dot V - \tilde\na_a \lt( \dot\sigma^{ab} \tilde\na_b \dot V \rt).
\end{align*}
\end{lem}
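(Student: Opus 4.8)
The plan is to expand the coordinate expression of the Laplace--Beltrami operator
\[ \Delta_\sigma V = \frac{1}{\sqrt{\det\sigma}}\pl_a\lt( \sqrt{\det\sigma}\,\sigma^{ab}\pl_b V \rt) \]
in even powers of $r$. After the removal of the $\na R$ contribution the relevant tensors contain only even powers: $\sigma_{ab} = \tilde\sigma_{ab} + r^2\dot\sigma_{ab} + O(r^4)$ and $V = 1 + r^2\dot V + r^4\ddot V + O(r^5)$, so that $\pl_b V = r^2\pl_b\dot V + r^4\pl_b\ddot V + O(r^5)$ and the whole operator begins at order $r^2$.

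First I would record the two algebraic expansions needed. The matrix inverse lemma gives $\sigma^{ab} = \tilde\sigma^{ab} - r^2\dot\sigma^{ab} + O(r^4)$, with $\dot\sigma^{ab} = \tilde\sigma^{ac}\tilde\sigma^{bd}\dot\sigma_{cd}$, and the Jacobi formula for the determinant gives $\sqrt{\det\sigma} = \sqrt{\det\tilde\sigma}\,\bigl( 1 + \tfrac{1}{2}r^2\tr\dot\sigma + O(r^4) \bigr)$, whence $\tfrac{1}{\sqrt{\det\sigma}} = \tfrac{1}{\sqrt{\det\tilde\sigma}}\bigl(1 - \tfrac{1}{2}r^2\tr\dot\sigma + O(r^4)\bigr)$. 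Substituting into the flux $\sqrt{\det\sigma}\,\sigma^{ab}\pl_b V$ and keeping terms through order $r^4$, this flux equals $\sqrt{\det\tilde\sigma}$ times
\[ r^2\tilde\sigma^{ab}\pl_b\dot V + r^4\lt( \tilde\sigma^{ab}\pl_b\ddot V + \tfrac{1}{2}\tr\dot\sigma\,\tilde\sigma^{ab}\pl_b\dot V - \dot\sigma^{ab}\pl_b\dot V \rt) + O(r^5). \]

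Next I would apply $\tfrac{1}{\sqrt{\det\sigma}}\pl_a$ and sort by order, repeatedly using $\tilde\Delta f = \tfrac{1}{\sqrt{\det\tilde\sigma}}\pl_a(\sqrt{\det\tilde\sigma}\,\tilde\sigma^{ab}\pl_b f)$ and $\tilde\na_a W^a = \tfrac{1}{\sqrt{\det\tilde\sigma}}\pl_a(\sqrt{\det\tilde\sigma}\,W^a)$. The $r^2$ term collapses to $\tilde\Delta\dot V$, and within the $r^4$ term the piece $\tilde\sigma^{ab}\pl_b\ddot V$ yields $\tilde\Delta\ddot V$. It remains to check that the surviving $r^4$ pieces assemble into $\dot\Delta\dot V$. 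Differentiating the product in $\tfrac{1}{2}\tr\dot\sigma\,\tilde\sigma^{ab}\pl_b\dot V$ produces $\tfrac{1}{2}(\pl_a\tr\dot\sigma)\tilde\na^a\dot V$ together with $\tfrac{1}{2}\tr\dot\sigma\,\tilde\Delta\dot V$, and the $-\dot\sigma^{ab}\pl_b\dot V$ piece yields $-\tilde\na_a(\dot\sigma^{ab}\tilde\na_b\dot V)$. Finally, the factor $-\tfrac{1}{2}r^2\tr\dot\sigma$ in the expansion of $1/\sqrt{\det\sigma}$, acting on the leading flux $\sqrt{\det\tilde\sigma}\,\tilde\sigma^{ab}\pl_b\dot V$, contributes $-\tfrac{1}{2}\tr\dot\sigma\,\tilde\Delta\dot V$, which exactly cancels the $\tfrac{1}{2}\tr\dot\sigma\,\tilde\Delta\dot V$ above. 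What remains is precisely $\dot\Delta\dot V = \tfrac{1}{2}(\pl_a\tr\dot\sigma)\tilde\na^a\dot V - \tilde\na_a(\dot\sigma^{ab}\tilde\na_b\dot V)$.

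The computation is purely mechanical, so I expect the only delicate point to be this last cancellation: one must keep separate the factor of $\tr\dot\sigma$ coming from the volume form inside $\pl_a$ and the one coming from the reciprocal volume form outside, and confirm that the product-rule term generated by the former annihilates the $\tfrac{1}{2}\tr\dot\sigma\,\tilde\Delta\dot V$ term. Once this is verified the identity follows.
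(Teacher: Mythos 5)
Your proposal is correct and follows essentially the same route as the paper: both expand the coordinate formula $\frac{1}{\sqrt{\det\sigma}}\pl_a(\sqrt{\det\sigma}\,\sigma^{ab}\pl_b V)$ using $\sigma^{ab}=\tilde\sigma^{ab}-r^2\dot\sigma^{ab}+O(r^4)$ and $\sqrt{\det\sigma}=\sqrt{\det\tilde\sigma}\,(1+\tfrac{1}{2}r^2\tr\dot\sigma+O(r^4))$, and the cancellation of the two $\tfrac{1}{2}\tr\dot\sigma\,\tilde\Delta\dot V$ terms that you flag is exactly what produces the paper's operator expansion $\Delta_\sigma=\tilde\Delta+r^2\bigl(\tfrac{1}{2}(\pl_a\tr\dot\sigma)\tilde\na^a-\tilde\na_a\dot\sigma^{ab}\tilde\na_b\bigr)+O(r^4)$.
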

\begin{proof}
The assertion follows from the expansion of the operator $\Delta_\sigma$ on $\pl M$:
\begin{align*}
\Delta_\sigma &= \frac{1}{\sqrt{\det\sigma}} \pl_a \sqrt{\det\sigma} \sigma^{ab} \pl_ b \\
&= \tilde\Delta + r^2 \lt( -\frac{1}{2} \frac{1}{\sqrt{\det\sigma}} \tr\dot\sigma \pl_a \sqrt{\det\sigma} \tilde\sigma^{ab} \pl_b + \frac{1}{\sqrt{\det\sigma}}\pl_a \lt( \frac{1}{2} \sqrt{\det\sigma} \tr\dot\sigma \tilde\sigma^{ab} - \sqrt{\det\sigma} \dot\sigma^{ab} \rt)\pl_b \rt) + O(r^4)\\
&= \tilde\Delta + r^2 \lt( \frac{1}{2} ( \pl_a \tr\dot\sigma) \tilde\na^a - \tilde\na_a \dot\sigma^{ab} \tilde\na_b \rt) + O(r^4).
\end{align*}
\end{proof}

\subsection{Second order evaluation}
We have
\begin{align*}
\ddot m = \frac{1}{4\pi} \int_{S^2} \lt( \frac{\pl V}{\pl\nu}\rt)^{\bigcdot\bigcdot} + \frac{1}{2} \lt( \frac{\pl V}{\pl\nu}\rt)^{\bigcdot}\tr\dot\sigma.
\end{align*}
It is straightforward to evaluate the second term
\begin{align}\label{2nd in ddot m}
\frac{1}{4\pi} \int_{S^2} \frac{1}{2}\lt( \frac{\pl V}{\pl\nu}\rt)^{\bigcdot}\tr\dot\sigma = -\frac{R^2}{108} + \frac{1}{24} \frac{R^2 + 2|Rc|^2}{15} = -\frac{7}{1080}R^2 + \frac{1}{180}|Rc|^2.
\end{align}
For the first term, we have
\begin{align*}
\frac{1}{4\pi}\int_{S^2} \lt( \frac{\pl V}{\pl\nu}\rt)^{\bigcdot\bigcdot} &= \frac{1}{4\pi}\int_{S^2} -\frac{1}{2}\dot\Delta \dot V - \frac{1}{2} Rc(\nu,\nu)^{\bigcdot\bigcdot} + \frac{1}{2} \dot V (\dot K - \dot H) - \frac{1}{2}\dot H \lt( \frac{\pl V}{\pl\nu}\rt)^{\bigcdot}.
\end{align*}

It is straightforward to evaluate terms other than $Rc(\nu,\nu)^{\bigcdot\bigcdot}$ using Lemma \ref{DeltaV} and \ref{Vdot} and we omit the proof.
\begin{lem}
\begin{align*}
\frac{1}{4\pi} \int_{S^2} -\frac{1}{2}\dot\Delta \dot V = \frac{|Rc|^2}{180} - \frac{R^2}{540}
\end{align*}
\begin{align*}
\frac{1}{4\pi} \int_{S^2} \frac{1}{2}\dot V(\dot K - \dot H) = -\frac{11R^2}{2160} - \frac{|Rc|^2}{180}
\end{align*}
\begin{align*}
\frac{1}{4\pi} \int_{S^2} - \frac{1}{2}\dot H \lt( \frac{\pl V}{\pl\nu}\rt)^{\bigcdot} = \frac{7R^2}{1080} - \frac{|Rc|^2}{180}
\end{align*}
\end{lem}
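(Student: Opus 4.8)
The plan is to reduce all three integrals to the same mechanical computation: substitute the explicit expressions for $\dot V$ and $\left(\frac{\pl V}{\pl \nu}\right)^{\bigcdot}$ from Lemma \ref{Vdot} together with $\dot H$, $\dot K$ (hence $\dot K - \dot H = \frac{R}{2} - R_{ij}X^iX^j$) and $\tr\dot\sigma$ drawn from Proposition \ref{first order data} and Lemma \ref{hdot}, then expand each integrand into a polynomial in the $X^i$ of degree at most four, and integrate termwise using the two moment identities $\frac{1}{4\pi}\int_{S^2} X^iX^j = \frac13\delta^{ij}$ and $\frac{1}{4\pi}\int_{S^2} X^iX^jX^kX^l = \frac{1}{15}(\delta^{ij}\delta^{kl} + \delta^{ik}\delta^{jl} + \delta^{il}\delta^{jk})$. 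The Kronecker deltas these produce contract the Ricci factors into the scalars $R_{ij}\delta^{ij} = R$ and $R_{ij}R_{ij} = |Rc|^2$; in particular $\frac{1}{4\pi}\int_{S^2} R_{ij}R_{kl}X^iX^jX^kX^l = \frac{1}{15}(R^2 + 2|Rc|^2)$, which is the source of every $|Rc|^2$ contribution.

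For the second and third identities this is immediate, since the integrands are literally products of the quantities listed above. The integrand $\frac12\dot V(\dot K - \dot H)$ expands into a constant, two quadratic terms, and one quartic term; applying the moment formulas and collecting the scalar-curvature pieces over a common denominator gives $-\frac{11R^2}{2160} - \frac{|Rc|^2}{180}$. Likewise $-\frac12\dot H\left(\frac{\pl V}{\pl \nu}\right)^{\bigcdot}$ splits into one quadratic and one quartic term and integrates to $\frac{7R^2}{1080} - \frac{|Rc|^2}{180}$.

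The only term needing a preliminary manipulation is the first, $\dot\Delta\dot V$. Rather than differentiate $\dot\sigma^{ab}$ and $\tilde\na\dot V$ directly, I would integrate by parts on $S^2$ using the formula in Lemma \ref{DeltaV}: the divergence term $\tilde\na_a(\dot\sigma^{ab}\tilde\na_b\dot V)$ integrates to zero, and integrating by parts once more in the surviving term gives
\begin{align*}
\int_{S^2}\dot\Delta\dot V = \frac12\int_{S^2}(\tilde\na_a\tr\dot\sigma)\,\tilde\na^a\dot V = -\frac12\int_{S^2}\tr\dot\sigma\,\tilde\Delta\dot V.
\end{align*}
Here $\tr\dot\sigma = -\frac13 R_{ij}X^iX^j$, while $\tilde\Delta\dot V$ is read off from the fact that $X^iX^j - \frac13\delta^{ij}$ is a degree-two spherical harmonic, so $\tilde\Delta(X^iX^j) = -6X^iX^j + 2\delta^{ij}$ and hence $\tilde\Delta\dot V = \frac{R}{6} - \frac12 R_{ij}X^iX^j$. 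Both factors are now quadratic in $X$, so the integral reduces again to the degree-two and degree-four moments, giving $\frac{R^2}{270} - \frac{|Rc|^2}{90}$ for $\frac{1}{4\pi}\int_{S^2}\dot\Delta\dot V$ and therefore $\frac{|Rc|^2}{180} - \frac{R^2}{540}$ after the factor $-\frac12$.

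The genuine step is the double integration by parts that converts $\dot\Delta\dot V$ into $-\frac12\tr\dot\sigma\,\tilde\Delta\dot V$, since it trades the derivatives of the metric variation for the explicitly known quadratic $\tilde\Delta\dot V$; everything after that is bookkeeping. The only real hazard is keeping common denominators straight when collecting the $R^2$ pieces, as each integral mixes contributions with denominators such as $54$, $90$, and $180$.
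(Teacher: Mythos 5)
Your computation is correct: substituting the explicit formulas for $\dot V$, $\bigl(\tfrac{\pl V}{\pl\nu}\bigr)^{\bigcdot}$, $\dot H$, $\dot K$, $\tr\dot\sigma$ and integrating with the second and fourth moment identities reproduces all three stated values, and your integration by parts reducing $\int_{S^2}\dot\Delta\dot V$ to $-\tfrac12\int_{S^2}\tr\dot\sigma\,\tilde\Delta\dot V$ is valid since the divergence term vanishes on the closed surface. This is exactly the "straightforward evaluation using Lemmas \ref{DeltaV} and \ref{Vdot}" that the paper invokes while omitting the details, so your proof fills the gap in the intended way.
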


Putting these together, we obtain
\begin{align}\label{ddot m, temp}
\ddot m = -\frac{R^2}{144} + \frac{1}{4\pi} \int_{S^2} -\frac{1}{2} Ric(\nu,\nu)^{\bigcdot\bigcdot}
\end{align}
\subsubsection{Evaluation of $\frac{1}{4\pi}\int_{S^2} -\frac{1}{2} Ric(\nu,\nu)^{\bigcdot\bigcdot}$}
\begin{lem}
\begin{align*}
\ddot H = -\frac{1}{45} R_{ikjl}R_{imjn} X^kX^lX^mX^n.
\end{align*}
\end{lem}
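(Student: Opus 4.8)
The plan is to extract $\ddot H$ directly from the $r^4$ term of the induced metric $\sigma_r$, together with a clean radial formula for the mean curvature. First I would record the required coefficient of $\sigma_r$. Under the hypothesis $\na R(p)=0$, the normal-coordinate expansion of $\gamma_{ij}$ recorded in Section 2 reduces, after substituting $x^i = rX^i$, to
\begin{align*}
\gamma_{ij}(rX) = \delta_{ij} - \frac{r^2}{3} R_{ikjl}X^kX^l + \frac{2r^4}{45} R_{ikpl}R_{jmpn}X^kX^lX^mX^n + O(r^5).
\end{align*}
Since the rescaled induced metric is $(\sigma_r)_{ab} = \gamma_{ij}(rX)\,\pl_a X^i \pl_b X^j$, its second coefficient is
\begin{align*}
\ddot\sigma_{ab} = \frac{2}{45} R_{ikpl}R_{jmpn} X^kX^lX^mX^n\, \pl_a X^i \pl_b X^j.
\end{align*}

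Next I would convert a radial derivative into the mean curvature. In geodesic polar coordinates the induced metric of $\gamma$ at radius $s$ equals $s^2(\sigma_s)_{ab}$, so the mean curvature of the unrescaled sphere is $\frac{1}{2}\pl_s \log\det(s^2\sigma_s) = \frac{2}{s} + \frac{1}{2}\pl_s\log\det\sigma_s$. Rescaling the metric by $r^{-2}$ multiplies the mean curvature by $r$, hence $H_r = 2 + \frac{r}{2}\pl_s\log\det\sigma_s|_{s=r}$. Expanding through $\log\det = \tr\log$,
\begin{align*}
\log\det\sigma_s = \log\det\tilde\sigma + s^2\,\tr\dot\sigma + s^4\lt( \tr\ddot\sigma - \tfrac{1}{2}|\dot\sigma|^2 \rt) + O(s^5),
\end{align*}
which after differentiating and multiplying by $r$ gives the two coefficients $\dot H = \tr\dot\sigma$ (consistent with Proposition \ref{first order data}) and
\begin{align*}
\ddot H = 2\,\tr\ddot\sigma - |\dot\sigma|^2.
\end{align*}

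It then remains to evaluate the two contractions, which is where the curvature symmetries do the work. For $\tr\ddot\sigma = \tilde\sigma^{ab}\ddot\sigma_{ab}$ I would use $\tilde\sigma^{ab}\pl_a X^i \pl_b X^j = \delta^{ij}-X^iX^j$; the $X^iX^j$ piece vanishes because $R_{ikpl}X^iX^k=0$ by antisymmetry in the first pair of indices, leaving (after renaming $p\to j$)
\begin{align*}
\tr\ddot\sigma = \frac{2}{45} R_{ikjl}R_{imjn} X^kX^lX^mX^n.
\end{align*}
For $|\dot\sigma|^2$ I would set $P_{ij}=R_{ikjl}X^kX^l$, which is symmetric and satisfies $P_{ij}X^i = P_{ij}X^j = 0$; then
\begin{align*}
|\dot\sigma|^2 = \frac{1}{9} P_{ij}P_{i'j'}(\delta^{ii'}-X^iX^{i'})(\delta^{jj'}-X^jX^{j'}) = \frac{1}{9} R_{ikjl}R_{imjn}X^kX^lX^mX^n,
\end{align*}
since every cross term contracts some $X$ into a $P$ and dies. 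Combining, $\ddot H = \lt( \frac{4}{45}-\frac{1}{9}\rt)R_{ikjl}R_{imjn}X^kX^lX^mX^n = -\frac{1}{45}R_{ikjl}R_{imjn}X^kX^lX^mX^n$, as claimed.

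The computation is mostly bookkeeping; the step demanding genuine care is the quadratic correction $-\tfrac{1}{2}|\dot\sigma|^2$ in the $\tr\log$ expansion of $\log\det\sigma_s$. Forgetting it would leave $\ddot H = 2\tr\ddot\sigma$ with the wrong coefficient $\frac{4}{45}$; it is precisely this term that shifts the answer to $-\frac{1}{45}$. The only other point to watch is the consistent use of $R_{ikjl}X^iX^k=0$, which is what collapses both contractions onto the single scalar $R_{ikjl}R_{imjn}X^kX^lX^mX^n$.
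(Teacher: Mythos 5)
Your proof is correct and follows essentially the same route as the paper: both extract $\ddot H$ by differentiating the induced metric radially in geodesic polar coordinates and contracting, using $R_{ikjl}X^iX^k=0$ to collapse both contractions onto the single scalar $R_{ikjl}R_{imjn}X^kX^lX^mX^n$. The only organizational difference is that you package the inverse-metric correction through $H=\tfrac{1}{2}\partial_s\log\det$ and $\log\det=\tr\log$, producing the $-\tfrac{1}{2}|\dot\sigma|^2$ term, whereas the paper expands $(\sigma^{-1})^{ab}$ explicitly to second order; your coefficients $\tfrac{4}{45}-\tfrac{1}{9}=-\tfrac{1}{45}$ agree with the paper's $\tfrac{6}{45}-\tfrac{2}{9}-\tfrac{2}{45}+\tfrac{1}{9}$.
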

\begin{proof}
For small spheres, we have $H = \lt( \sigma^{-1}\rt)^{ab} \frac{1}{2}\pl_r \sigma_{ab}$ with
\begin{align*}
\lt( \sigma^{-1} \rt)^{ab} &=  \frac{\tilde\sigma^{ab}}{r^2} + \frac{1}{3}R_{ikjl}X^kX^l \tilde\na^a X^i \tilde\na^b X^j \\
&\quad + r^2 \lt( -\frac{2}{45}+\frac{1}{9}\rt) \sum_k R_{imkn}R_{jpkq} \tilde\na^a X^i \tilde \na^b X^j X^mX^nX^pX^q + O(r^3)
\end{align*}
and
\begin{align*}
\frac{1}{2} \pl_r \sigma_{ab} &= r\tilde\sigma_{ab} - \frac{2}{3}r^3 R_{mpnq} X^pX^q\pl_a X^m\pl_b X^n \\
&\quad + \frac{6}{45}r^5  \sum_k R_{imkn}R_{jpkq} \pl_a X^i \pl_b X^j X^mX^pX^nX^q. 
\end{align*}
Hence we get the term $-\frac{1}{45}r^5 R_{ikjl}R_{imjn} X^kX^lX^mX^n$ and the assertion then follows from scaling.
\end{proof}
We first compute
\begin{align*}
\lt( |h|^2_\sigma \rt)^{\bigcdot\bigcdot} = 2\ddot H + |\dot\sigma|^2 + |\dot h|^2 - 2 \dot\sigma^{ab} \dot h_{ab}
\end{align*}
and obtain, via Gauss equation, 
\begin{align*}
\int_{S^2} - \frac{1}{2} Rc(\nu,\nu)^{\bigcdot\bigcdot} &= \int_{S^2} \frac{1}{2} \lt( \ddot K - \frac{1}{2}\dot H^2 -2 \ddot H + \frac{1}{2} \lt( |h|^2 \rt)^{\bigcdot\bigcdot} \rt)\\
&= \int_{S^2} \frac{1}{2}\ddot K - \frac{1}{4} \dot H^2 -\frac{1}{2}\ddot H + \frac{1}{4}|\dot\sigma|^2 + \frac{1}{4}|\dot h|^2 - \frac{1}{2}\dot\sigma^{ab}\dot h_{ab}\\
&= I_1 + I_2 + I_3 + I_4 + I_5 + I_6. 
\end{align*}

We introduce two quantities \begin{align*}
A &= R_{minj}R^{mknl} \frac{\delta^{ij}\delta^{kl} + \delta^{ik}\delta^{jl} + \delta^{il}\delta^{jk}}{15} = \frac{1}{15}\lt( |Rc|^2 + |Rm|^2 + R_{minj}R^{mjni} \rt)\\
&= \frac{1}{15} \lt( 7 |Rc|^2 - \frac{3}{2}R^2 \rt)\\
B &= R_{ij}R_{kl} \frac{\delta^{ij}\delta^{kl} + \delta^{ik}\delta^{jl} + \delta^{il}\delta^{jk}}{15} = \frac{1}{15} \lt( R^2 + 2|Rc|^2 \rt) 
\end{align*}
where we use (A.5) of \cite{W2} in the last equality of $A$.

Terms directly related to Bartnik data can be readily evaluated and we omit the proof. 
\begin{lem}
\begin{align*}
I_2 = \frac{1}{4\pi}\int_{S^2} -\frac{1}{4} \dot H^2 = -\frac{1}{36}B = -\frac{R^2}{540} - \frac{|Rc|^2}{270}
\end{align*}

\begin{align*}
I_3 = \frac{1}{4\pi}\int_{S^2} -\frac{1}{2}\ddot H = \frac{1}{90} A
\end{align*}

\begin{align*}
I_4 = \frac{1}{4\pi}\int_{S^2} \frac{1}{4}|\dot\sigma|^2 = \frac{1}{36} A
\end{align*}
\end{lem}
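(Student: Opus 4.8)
The plan is to substitute the explicit expressions for $\dot H$, $\ddot H$, and $\dot\sigma_{ab}$ obtained above and reduce each integrand to a degree-four polynomial in the coordinates $X^i$, which can then be integrated by the moment identity
\[ \frac{1}{4\pi}\int_{S^2} X^iX^jX^kX^l = \frac{1}{15}\lt(\delta^{ij}\delta^{kl} + \delta^{ik}\delta^{jl} + \delta^{il}\delta^{jk}\rt). \]
For $I_2$ I would simply square $\dot H = -\frac{1}{3}R_{ij}X^iX^j$ to get $\dot H^2 = \frac{1}{9}R_{ij}R_{kl}X^iX^jX^kX^l$, apply the moment identity, and recognize the result as $\frac{1}{9}B$; the stated value $-\frac{1}{36}B$ and its expansion follow from the definition of $B$.

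For $I_3$ I would insert $\ddot H = -\frac{1}{45}R_{ikjl}R_{imjn}X^kX^lX^mX^n$ from the preceding lemma and contract the four factors of $X$ (on the indices $k,l,m,n$) against the moment identity. The three terms $\delta^{kl}\delta^{mn}$, $\delta^{km}\delta^{ln}$, $\delta^{kn}\delta^{lm}$ produce $|Rc|^2$, $|Rm|^2$, and $R_{minj}R^{mjni}$ respectively, using $\sum_k R_{ikjk}=R_{ij}$ on the first and the pair symmetry $R_{abcd}=R_{cdab}$ to relabel the third into the form appearing in $A$. Summing gives $15A$ by the definition of $A$, so $\frac{1}{4\pi}\int_{S^2}\ddot H = -\frac{1}{45}A$ and $I_3 = \frac{1}{90}A$.

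The main work is $I_4$. Here I would expand
\[ |\dot\sigma|^2 = \frac{1}{9} R_{ikjl}R_{pqrs}\,X^iX^jX^pX^r\,\lt(\tilde\na_a X^k\,\tilde\na^a X^q\rt)\lt(\tilde\na_b X^l\,\tilde\na^b X^s\rt) \]
and apply $\tilde\na_a X^k\,\tilde\na^a X^q = \delta^{kq} - X^kX^q$ to split the integrand into four pieces. The key observation is that every piece except the leading $\delta^{kq}\delta^{ls}$ term vanishes identically, before any integration: in each of the remaining three, both members of an antisymmetric index pair of one of the two Riemann factors carry a factor of $X$, so the contraction of an antisymmetric tensor against a symmetric product of the $X$'s is zero. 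This collapses $|\dot\sigma|^2$ to the degree-four expression $\frac{1}{9}R_{ikjl}R_{pkrl}X^iX^jX^pX^r$, and the same moment identity and index bookkeeping as in $I_3$ give $\frac{1}{4\pi}\int_{S^2}|\dot\sigma|^2 = \frac{1}{9}A$, whence $I_4 = \frac{1}{36}A$.

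The hard part will be the $I_4$ computation, since a naive expansion of $|\dot\sigma|^2$ produces degree-six and degree-eight contributions that would otherwise require higher moment formulas; the step that must be carried out carefully is verifying, term by term, that each of these higher-order pieces is annihilated by a Riemann antisymmetry, together with the correct identification of the surviving contractions $\sum_i R_{ikil}=R_{kl}$ and $R_{ikjl}R_{jkil}=R_{minj}R^{mjni}$ that convert the leading term into $A$.
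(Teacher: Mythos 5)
Your computations are correct and follow exactly the direct route the paper has in mind when it says these terms "can be readily evaluated": substitute the explicit formulas for $\dot H$, $\ddot H$, $\dot\sigma_{ab}$, kill the higher-degree pieces of $|\dot\sigma|^2$ via the antisymmetry of the Riemann tensor against the symmetric products of the $X^i$, and contract with the fourth-moment identity to land on the quantities $A$ and $B$. Nothing is missing; the values $-\tfrac{1}{36}B$, $\tfrac{1}{90}A$, $\tfrac{1}{36}A$ all check out.
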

The next two lemma handle terms coming from $\dot h$.
\begin{lem}
\begin{align*}
I_5 = \frac{1}{4\pi}\int_{S^2} \frac{1}{4}|\dot h|^2 = \frac{11}{144} B - \frac{R^2}{432}. 
\end{align*}
\end{lem}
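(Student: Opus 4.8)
The plan is to exploit the orthogonal splitting of $\dot h_{ab}$ furnished by Lemma \ref{hdot}. Writing $\dot h_{ab} = \frac{1}{2}\tr\dot h\,\tilde\sigma_{ab} + k_{ab}$ with traceless part $k_{ab} = \tilde\na_a\tilde\na_b\mathrm{k} - \frac{1}{2}\tilde\Delta\mathrm{k}\,\tilde\sigma_{ab}$, the cross term vanishes because $\tilde\sigma^{ab}k_{ab}=0$, and since $\tilde\sigma_{ab}\tilde\sigma^{ab}=2$ we get the pointwise identity $|\dot h|^2 = \frac{1}{2}(\tr\dot h)^2 + |k|^2$. I would evaluate the two pieces separately. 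Recall from Lemma \ref{hdot} that $\tr\dot h = -\frac{2}{3}R_{ij}X^iX^j$, so the trace contribution $\frac{1}{4\pi}\int_{S^2}\frac{1}{2}(\tr\dot h)^2$ is a fourth moment of the coordinate functions, evaluated directly by the identity $\frac{1}{4\pi}\int_{S^2} X^kX^lX^mX^n = \frac{1}{15}(\delta^{kl}\delta^{mn}+\delta^{km}\delta^{ln}+\delta^{kn}\delta^{lm})$; this yields exactly $\frac{2}{9}B$.

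The heart of the argument is the traceless piece $\frac{1}{4\pi}\int_{S^2}|k|^2$. The key observation is that $\mathrm{k}$ is a pure degree-two spherical harmonic: since $R_{ij}(X^iX^j-\frac{1}{3}\delta^{ij}) = R_{ij}X^iX^j - \frac{1}{3}R$ is the $\ell=2$ part of $R_{ij}X^iX^j$, one checks from $\tilde\Delta(X^iX^j)=2\delta^{ij}-6X^iX^j$ that $\tilde\Delta\mathrm{k} = -6\mathrm{k}$. I would then convert $\int_{S^2}|k|^2$ into an integral of $\mathrm{k}^2$ by integration by parts. Expanding the square of the traceless Hessian gives $|k|^2 = |\tilde\na^2\mathrm{k}|^2 - \frac{1}{2}(\tilde\Delta\mathrm{k})^2$, and the Bochner identity on $(S^2,\tilde\sigma)$, where the Ricci curvature equals $\tilde\sigma$, gives $\int_{S^2}|\tilde\na^2\mathrm{k}|^2 = \int_{S^2}(\tilde\Delta\mathrm{k})^2 - |\tilde\na\mathrm{k}|^2$. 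Feeding in $\tilde\Delta\mathrm{k}=-6\mathrm{k}$, so that $\int_{S^2}(\tilde\Delta\mathrm{k})^2 = 36\int_{S^2}\mathrm{k}^2$ and $\int_{S^2}|\tilde\na\mathrm{k}|^2 = -\int_{S^2}\mathrm{k}\,\tilde\Delta\mathrm{k} = 6\int_{S^2}\mathrm{k}^2$, collapses the whole expression to $\int_{S^2}|k|^2 = 12\int_{S^2}\mathrm{k}^2$.

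It then remains to compute $\frac{1}{4\pi}\int_{S^2}\mathrm{k}^2$. Using $\mathrm{k} = \frac{1}{12}R_{ij}(X^iX^j-\frac{1}{3}\delta^{ij})$, the fourth-moment identity above, and $\frac{1}{4\pi}\int_{S^2} X^iX^j = \frac{1}{3}\delta^{ij}$, the three cross terms recombine to give $\frac{1}{4\pi}\int_{S^2}\mathrm{k}^2 = \frac{1}{144}\bigl(B - \frac{1}{9}R^2\bigr)$, hence $\frac{1}{4\pi}\int_{S^2}|k|^2 = \frac{1}{12}\bigl(B-\frac{1}{9}R^2\bigr)$. Adding the two contributions yields $\frac{1}{4\pi}\int_{S^2}|\dot h|^2 = \frac{2}{9}B + \frac{1}{12}B - \frac{R^2}{108} = \frac{11}{36}B - \frac{R^2}{108}$, and multiplying by $\frac{1}{4}$ produces $I_5 = \frac{11}{144}B - \frac{R^2}{432}$, as claimed.

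I expect the main obstacle to be the bookkeeping in the integration-by-parts step: one must apply the Bochner formula on the round sphere correctly (with Ricci equal to the metric, so that $\mathrm{Ric}(\tilde\na\mathrm{k},\tilde\na\mathrm{k}) = |\tilde\na\mathrm{k}|^2$) and track the eigenvalue $\ell(\ell+1)=6$ through to the coefficient $12$. Once the $\ell=2$ structure of $\mathrm{k}$ is recognized, the remaining work is mechanical evaluation of second and fourth moments of the $X^i$, with no genuine analytic difficulty.
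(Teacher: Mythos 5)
Your proposal is correct and follows essentially the same route as the paper: both split $|\dot h|^2 = \frac{1}{2}(\tr\dot h)^2 + |k|^2$, reduce $\int_{S^2}|k|^2$ to $12\int_{S^2}\mathrm{k}^2$ using that $\mathrm{k}$ is an $\ell=2$ eigenfunction, and finish with the second and fourth moments of the $X^i$. The only difference is that you derive the identity $\int_{S^2}|k|^2 = 12\int_{S^2}\mathrm{k}^2$ directly via the Bochner formula on the round sphere, whereas the paper imports it as Lemma 2.6 of \cite{CKWWY}; your computation of that step, and all the resulting coefficients, check out.
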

\begin{proof}
By virtue of Lemma 2.6 of \cite{CKWWY}, we have
\begin{align*}
\frac{1}{4\pi} \int_{S^2} \frac{1}{4} |\dot h|^2 &= \frac{1}{4\pi} \int_{S^2} \frac{1}{18} R_{ij}X^iX^j R_{kl}X^kX^l + \frac{1}{4} |k|^2 \\
&= \frac{1}{18}B + \frac{1}{4\pi} \int_{S^2} \frac{1}{8}\cdot \frac{1}{6} R_{ij} (X^iX^j - \frac{1}{3}\delta^{ij}) R_{kl} (X^kX^l - \frac{1}{3}\delta^{kl})\\ &= \frac{1}{18}B + \frac{1}{48}B - \frac{R^2}{432}
\end{align*}
\end{proof}

\begin{lem}
\begin{align*}
I_6 = \frac{1}{4\pi}\int_{S^2} -\frac{1}{2}\dot\sigma^{ab} \dot h_{ab} = -\frac{1}{24}B + \frac{|Rc|^2}{108} -\frac{R^2}{216}
\end{align*}
\end{lem}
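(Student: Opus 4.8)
The plan is to substitute the decomposition of $\dot h_{ab}$ from Lemma \ref{hdot} and split the integrand into a pure-trace piece and a piece involving the trace-free tensor $k_{ab}$. Writing $\dot h_{ab}=-\frac13 R_{ij}X^iX^j\,\tilde\sigma_{ab}+k_{ab}$ with $k_{ab}=\tilde\na_a\tilde\na_b\mathrm k-\frac12\tilde\Delta\mathrm k\,\tilde\sigma_{ab}$, and using $\tr\dot\sigma=\tilde\sigma^{ab}\dot\sigma_{ab}=-\frac13 R_{ij}X^iX^j$, one gets
\begin{align*}
-\tfrac12\dot\sigma^{ab}\dot h_{ab}=\tfrac16 R_{ij}X^iX^j\,\tr\dot\sigma-\tfrac12\dot\sigma^{ab}k_{ab}=-\tfrac{1}{18}\bigl(R_{ij}X^iX^j\bigr)^2-\tfrac12\dot\sigma^{ab}k_{ab}.
\end{align*}
The first term integrates immediately by the degree-four moment identity $\frac{1}{4\pi}\int_{S^2}X^iX^jX^kX^l=\frac{1}{15}(\delta^{ij}\delta^{kl}+\delta^{ik}\delta^{jl}+\delta^{il}\delta^{jk})$, giving $-\frac{1}{18}B$.

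For the second term I would exploit that $k_{ab}$ is trace-free and of pure Hessian type. Only the trace-free part $\hat\sigma_{ab}:=\dot\sigma_{ab}-\frac12\tr\dot\sigma\,\tilde\sigma_{ab}$ of $\dot\sigma$ survives the contraction against $k$, and the $\frac12\tilde\Delta\mathrm k\,\tilde\sigma_{ab}$ summand drops out, so integrating by parts twice on the round sphere,
\begin{align*}
\int_{S^2}\dot\sigma^{ab}k_{ab}=\int_{S^2}\hat\sigma^{ab}\,\tilde\na_a\tilde\na_b\mathrm k=\int_{S^2}\bigl(\tilde\na^a\tilde\na^b\hat\sigma_{ab}\bigr)\,\mathrm k.
\end{align*}
The computation therefore collapses onto the double divergence of $\hat\sigma$. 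The key claim is that this is a multiple of $\mathrm k$: computing directly from $\dot\sigma_{ab}=-\frac13 R_{ikjl}X^iX^j\pl_aX^k\pl_bX^l$ with the sphere identities $\tilde\na_a\tilde\na_bX^i=-X^i\tilde\sigma_{ab}$ and $\tilde\na^cX^i\,\tilde\na_cX^j=\delta^{ij}-X^iX^j$, every term collapses into Ricci contractions and yields $\tilde\na^a\tilde\na^b\dot\sigma_{ab}=\frac13 R-R_{ij}X^iX^j=-12\mathrm k$; subtracting $\frac12\tilde\Delta\tr\dot\sigma=R_{ij}X^iX^j-\frac13 R=12\mathrm k$ gives $\tilde\na^a\tilde\na^b\hat\sigma_{ab}=-24\mathrm k$. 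Hence $\int_{S^2}\dot\sigma^{ab}k_{ab}=-24\int_{S^2}\mathrm k^2$.

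It then remains only to evaluate $\int_{S^2}\mathrm k^2$. Since $\mathrm k=\frac{1}{12}R_{ij}(X^iX^j-\frac13\delta^{ij})$, the same moment identity gives $\frac{1}{4\pi}\int_{S^2}\mathrm k^2=\frac{1}{144}(B-\frac{R^2}{9})$, so $\frac{1}{4\pi}\int_{S^2}\dot\sigma^{ab}k_{ab}=-\frac16 B+\frac{R^2}{54}$. Assembling the two contributions,
\begin{align*}
I_6=-\tfrac{1}{18}B-\tfrac12\Bigl(-\tfrac16 B+\tfrac{R^2}{54}\Bigr)=\tfrac{1}{36}B-\tfrac{R^2}{108},
\end{align*}
and rewriting with $B=\frac{1}{15}(R^2+2|Rc|^2)$ one checks this equals the stated $-\frac{1}{24}B+\frac{|Rc|^2}{108}-\frac{R^2}{216}$.

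The main obstacle is exactly the identity $\tilde\na^a\tilde\na^b\hat\sigma_{ab}=-24\mathrm k$. It is tempting to read it off by inverting the linearized Gauss-curvature formula of Proposition \ref{first order data} against the known $\dot K$, but that route is delicate: the correct scalar-curvature linearization on the unit sphere is $2\dot K=-\tilde\Delta\tr\dot\sigma+\tilde\na^a\tilde\na^b\dot\sigma_{ab}-\tr\dot\sigma$, and using a mismatched sign on the $\tilde\Delta\tr\dot\sigma$ term rescales the double divergence by a spurious factor. I would therefore compute $\tilde\na^a\tilde\na^b\dot\sigma_{ab}$ by hand, where the Hessian identity for $X^i$ forces every higher moment to cancel and leaves only $\frac13 R-R_{ij}X^iX^j$; everything downstream is bookkeeping of the two elementary angular averages.
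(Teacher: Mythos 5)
Your proof is correct and reaches the stated value: your form $\frac{1}{36}B-\frac{R^2}{108}$ and the paper's $-\frac{1}{24}B+\frac{|Rc|^2}{108}-\frac{R^2}{216}$ both reduce to $-\frac{R^2}{135}+\frac{|Rc|^2}{270}$ upon substituting $B=\frac{1}{15}(R^2+2|Rc|^2)$. The route is genuinely different from the paper's. The paper writes out the trace-free part pointwise, $\tilde\na_a\tilde\na_b \mathrm{k} - \frac{1}{2}\tilde\Delta \mathrm k\, \tilde\sigma_{ab} = \frac{1}{12}R_{ij}\lt( \tilde\na_a X^i \tilde\na_b X^j + \tilde\na_b X^i \tilde\na_a X^j - (\delta^{ij}-X^iX^j)\tilde\sigma_{ab} \rt)$, contracts this directly against $\dot\sigma^{ab}$, and integrates the resulting Riemann--Ricci contractions term by term; you instead integrate by parts twice to throw the double divergence onto the trace-free part of $\dot\sigma$ and collapse the whole term to $-24\int_{S^2}\mathrm k^2$. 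Your key identity $\tilde\na^a\tilde\na^b\dot\sigma_{ab}=\frac13 R-R_{ij}X^iX^j$ checks out by direct computation (it also follows by iterating the divergence $\tilde\na^a\dot\sigma_{ab}=\frac16 R_{jk}\pl_b(X^jX^k)$ already recorded in the proof of Lemma \ref{hdot}), and your caution about the linearized Gauss-curvature formula is warranted: the sign on the $\tilde\Delta(\tilde\sigma^{ab}\dot\sigma_{ab})$ term in Proposition \ref{first order data} is indeed a typo, though the stated $\dot K$ is consistent with the correct sign and with Gauss--Bonnet. What your approach buys is a shorter endgame --- one scalar integral of $\mathrm k^2$ and a single fourth-moment evaluation, with no four-index curvature contractions --- at the cost of computing a double divergence; the paper's version is purely pointwise and needs no integration by parts. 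Both are valid.
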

\begin{proof}
We compute \begin{align*}
\na_a \na_b \mathrm{k} - \frac{1}{2}\tilde\Delta \mathrm k \tilde\sigma_{ab} = \frac{1}{12}R_{ij} \lt( \na_a X^i \na_b X^j + \na_b X^i \na_a X^j - (\delta^{ij}-X^iX^j)\tilde\sigma_{ab} \rt)
\end{align*}
to get
\begin{align*}
\dot\sigma^{ab} \dot h_{ab} &= \frac{1}{9}R_{kl} X^kX^l R_{mn}X^mX^n\\
&\quad - \frac{1}{36}R_{ikjl}X^kX^l R_{mn}\lt( \delta^{im}\delta^{jn} + \delta^{in}\delta^{jm}-(\delta^{ij}-X^iX^j)(\delta^{mn}-X^mX^n) \rt)\\
&= \frac{1}{12} R_{kl} X^kX^l R_{mn}X^mX^n - \frac{1}{18}R_{ikjl}X^kX^l R^{ij} + \frac{1}{36}R_{kl}X^kX^l R.
\end{align*}
\end{proof}
Lastly, we evaluate $I_1$. Taking the second variation of the Gauss-Bonnet Theorem, we get
\begin{align*}
\frac{1}{4\pi}\int_{S^2} \ddot K &= \frac{1}{4\pi}\int_{S^2} - \frac{1}{4\pi}\frac{1}{2}\dot K \tr\dot\sigma + \int_{S^2} -\frac{1}{2}\tr\ddot\sigma - \frac{1}{8} (\tr\dot\sigma)^2 + \frac{1}{4}|\dot\sigma|^2 \\
&= \lt( -\frac{2}{9}B + \frac{R^2}{36} \rt) + \lt( -\frac{1}{45}A - \frac{1}{72}B + \frac{1}{36}A \rt)\\
&= \frac{R^2}{36} + \frac{1}{180}A - \frac{17}{72}B
\end{align*}

\begin{lem}
\begin{align*}
\frac{1}{4\pi}\int_{S^2} -\frac{1}{2}Rc(\nu,\nu)^{\bigcdot\bigcdot} = \frac{1}{72}|Rc|^2 - \frac{R^2}{216}.
\end{align*}
\end{lem}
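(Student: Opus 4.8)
The proof is the final bookkeeping step: the quantity $\tfrac{1}{4\pi}\int_{S^2}-\tfrac12 Rc(\nu,\nu)^{\bigcdot\bigcdot}$ has already been rewritten, via the second variation of the Gauss equation together with $(|h|^2_\sigma)^{\bigcdot\bigcdot}=2\ddot H+|\dot\sigma|^2+|\dot h|^2-2\dot\sigma^{ab}\dot h_{ab}$, as the sum $I_1+I_2+I_3+I_4+I_5+I_6$, and every summand has been evaluated above. Concretely, $I_2,\dots,I_6$ are supplied by the preceding lemmas, while $I_1=\tfrac12\cdot\tfrac{1}{4\pi}\int_{S^2}\ddot K$ comes from the second variation of Gauss--Bonnet computed just above, namely $\tfrac{1}{4\pi}\int_{S^2}\ddot K=\tfrac{R^2}{36}+\tfrac{1}{180}A-\tfrac{17}{72}B$, so that $I_1=\tfrac{R^2}{72}+\tfrac{1}{360}A-\tfrac{17}{144}B$. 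The plan is therefore simply to add the six expressions and simplify.

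To keep the arithmetic under control I would first express the total in the basis $\{A,\,B,\,R^2,\,|Rc|^2\}$ before substituting the definitions of $A$ and $B$. Collecting the coefficients of $A$ from $I_1,I_3,I_4$ gives $\tfrac{1}{360}+\tfrac{1}{90}+\tfrac{1}{36}=\tfrac{1}{24}$; collecting the coefficients of $B$ from $I_1,I_2,I_5,I_6$ gives $-\tfrac{17}{144}-\tfrac{1}{36}+\tfrac{11}{144}-\tfrac{1}{24}=-\tfrac19$; and the remaining ``loose'' monomials from $I_1,I_5,I_6$ sum to $\tfrac{1}{144}R^2+\tfrac{1}{108}|Rc|^2$. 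Thus the integral equals $\tfrac{1}{24}A-\tfrac19 B+\tfrac{1}{144}R^2+\tfrac{1}{108}|Rc|^2$.

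Finally I would insert $A=\tfrac{1}{15}\bigl(7|Rc|^2-\tfrac32 R^2\bigr)$ and $B=\tfrac{1}{15}\bigl(R^2+2|Rc|^2\bigr)$ and combine over the common denominators $1080$ (for $|Rc|^2$) and $2160$ (for $R^2$); the $|Rc|^2$ coefficient collapses to $\tfrac{7}{360}-\tfrac{2}{135}+\tfrac{1}{108}=\tfrac{1}{72}$ and the $R^2$ coefficient to $-\tfrac{1}{240}-\tfrac{1}{135}+\tfrac{1}{144}=-\tfrac{1}{216}$, which is the asserted value. The only genuine difficulty is the risk of a slip among the many rational coefficients, which is precisely why I would reduce everything to the two scalar invariants $A,B$ first and postpone the single substitution that produces the final cancellation.
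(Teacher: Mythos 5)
Your proposal is correct and follows essentially the same route as the paper: both simply sum the previously computed values of $I_1,\dots,I_6$ in the $\{A,B,R^2,|Rc|^2\}$ basis and then substitute $A=\tfrac{1}{15}\bigl(7|Rc|^2-\tfrac32 R^2\bigr)$, $B=\tfrac{1}{15}\bigl(R^2+2|Rc|^2\bigr)$; the only (immaterial) difference is that you keep $I_2$ as $-\tfrac{1}{36}B$ while the paper pre-expands it, and your intermediate coefficients $\tfrac{1}{24}A-\tfrac19 B+\tfrac{1}{144}R^2+\tfrac{1}{108}|Rc|^2$ and final answer all check out.
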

\begin{proof}
We add up the results from previous lemmas
\begin{align*}\def\arraystretch{1.2}
\begin{array}{ccccc}
& R^2 & A & B & |Rc|^2\\
\hline
I_1 & \frac{1}{72} & \frac{1}{360} & -\frac{17}{144} \\
I_2 & -\frac{1}{540} & & & -\frac{1}{270}\\
I_3 & & \frac{1}{90}\\
I_4 & & \frac{1}{36}\\
I_5 & -\frac{1}{432} & & \frac{11}{144}\\
I_6 & -\frac{1}{216} & & -\frac{1}{24}1 & \frac{1}{108}
\end{array}
\end{align*}
to get
\begin{align*}
\frac{1}{4\pi}\int_{S^2} -\frac{1}{2}Rc(\nu,\nu)^{\bigcdot\bigcdot} &= \frac{11}{2160}R^2 + \frac{1}{24}A - \frac{1}{12}B + \frac{1}{180}|Rc|^2 \\
&= -\frac{1}{216}R^2 + \frac{1}{72}|Rc|^2
\end{align*}
\end{proof}
Recalling \eqref{ddot m, temp}, we obtain
\[ \ddot m = -\frac{R^2}{144} + \frac{1}{4\pi}\int_{S^2} -\frac{1}{2} Rc(\nu,\nu)^{\bigcdot\bigcdot} = \frac{1}{72}|Rc|^2 - \frac{25}{2160} R^2. \]
This completes the proof of Theorem \ref{main}.

\end{document}